\documentclass[11pt]{article}
\usepackage{amsmath, amssymb, theorem, latexsym, epsfig}
\numberwithin{equation}{section}

\theoremstyle{plain}
\theorembodyfont{\itshape}
\newtheorem{theorem}{Theorem}[section]
\newtheorem{proposition}[theorem]{Proposition}
\newtheorem{lemma}[theorem]{Lemma}
\newtheorem{corollary}[theorem]{Corollary}

\theorembodyfont{\rmfamily}
\newtheorem{definition}[theorem]{Definition}

\newtheorem{remark}[theorem]{Remark}

\newenvironment{proof}{{\noindent \textbf{Proof}\,\,}}{\hspace*{\fill}$\Box$\medskip}
\newcommand{\eps}{\varepsilon}
\newcommand{\Sing}{\operatorname{Sing}}

\title{On polynomially integrable planar outer billiards
and curves with symmetry property}
\author{A.Glutsyuk\thanks{ CNRS, France (UMR 5669 (UMPA, ENS de Lyon) and UMI 2615 (Lab. J.-V.Poncelet)), Lyon, France.
E-mail:
aglutsyu@ens-lyon.fr}
\thanks{National Research University Higher School of Economics (HSE), Moscow, Russia}
 \thanks{Supported by part by RFBR grants 13-01-00969-a, 16-01-00748, 16-01-00766
 and ANR grant ANR-13-JS01-0010.},  E.Shustin\thanks{School of Math. Sci., Tel Aviv University,
 Ramat Aviv, Tel Aviv 69978, Israel. E-mail: shustin@post.tau.ac.il}
 \thanks{Supported by the grant 176/15 from the Israeli Science Foundation.}}

\begin{document}
\maketitle
\begin{abstract} We show that every polynomially integrable planar outer convex billiard is elliptic. We also
prove an extension of this statement to non-convex billiards.
\end{abstract}

\tableofcontents

\def\cc{\mathbb C}
\def\oc{\overline{\cc}}
\def\oci{\oc_{\infty}}
\def\cp{\mathbb{CP}}
\def\wt#1{\widetilde#1}
\def\rr{\mathbb R}
\def\var{\varepsilon}
\def\tt{\mathcal T}
\def\mcr{\mathcal R}
\def\a{\alpha}
\def\ha{\hat a}
\def\hb{\hat b}
\def\hc{\hat c}
\def\hd{\hat d}
\def\nn{\mathbb N}
 \def\mct{\mathcal T}
 \def\Int{\operatorname{Int}}
 \section{Introduction, main result and plan of the paper}

\subsection{Introduction and main result}

Let $C\subset\rr^2$ be a smooth closed strictly convex curve. Let $S$ denote the complement of the ambient plane $\rr^2$ to the
closure of the interior of the curve $C$. The {\it (planar) outer billiard} is a dynamical system $\mct:S\to S$ defined as follows. Pick a point
$A\in S$. There are two tangent rays to $C$ issued from the point $A$. Let $R$ denote the right tangent ray: the other tangent ray is
obtained from $R$ by rotation around the point $A$ of angle between zero and $\pi$.  Let $P$ denote the tangency point of the ray $R$
with the curve $C$. By definition, {\it the image $\mct(A)$ is the point of the
ray $R$ that is symmetric to $A$ with respect to the  point $P$.}

Outer billiards were introduced by J.Moser in \cite{moser} as a toy model for planetary motion.
A mechanical interpretation of outer billiard as an impact oscillator was given by Ph. Boyland
\cite{boyland}. For a survey on outer billiards see \cite{tab95, tab05, tabdog}. It is well-known that the outer billiard map preserves the

standard Euclidean area, and hence, it is a symplectomorphism.

The famous Birkhoff Conjecture deals with convex bounded planar billiards having a smooth boundary: the classical (Birkhoff)
billiards with the usual reflection law,
the angle of incidence equals the angle of reflection.  Recall that a {\it caustic} of a planar billiard $\Omega$
  is a curve $C$
 such that each tangent line to $C$ reflects from the boundary of the billiard to a line tangent to $C$.
 A billiard  is called {\it Birkhoff caustic-integrable,} if a neighborhood of its boundary  is foliated by closed caustics.
 The {\bf Birkhoff Conjecture}  states that {\it the only integrable billiard is an ellipse.}

The Birkhoff Conjecture  motivated an analogous conjecture for the outer billiards.
Namely, let $C\subset\rr^2$ be a smooth closed strictly convex curve, $S\subset\rr^2$ be the
exterior component of its complement. We say that the corresponding outer billiard map
$\mct:S\to S$ is
{\it integrable},  if some neighborhood $U$ of the curve $C$ in $\overline S$ admits a smooth $\mct$-invariant function $U\to\rr$ (called a {\it first integral}) that has no critical points on $C$.
It is known that every elliptic outer billiard is integrable. S.L.Tabachnikov's conjecture
\cite[p. 101]{tab08} states that the converse is also true:
 if an outer billiard is integrable, then it is an ellipse. We prove this conjecture for polynomial first integrals.

\begin{definition} A planar outer billiard is {\it polynomially integrable,} if there exists a non-constant polynomial $f(x,y)$ that is  invariant under the outer billiard mapping.
\end{definition}

\begin{remark} Polynomiality of the integral is a very strong restriction of the general Tabachnikov's Conjecture.
At the same time,  the condition of
just non-constancy of a polynomial integral does not forbid it to have critical points on the billiard boundary, while
the definition  of general integrability forbids them.
 \end{remark}

The main result of the paper is the following theorem.

\begin{theorem} \label{tab} Let a planar outer billiard generated by a $C^4$-smooth strictly convex closed
curve $C$ be polynomially integrable. Then $C$ is an ellipse.
\end{theorem}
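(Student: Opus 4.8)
Let $f(x,y)$ be a non-constant polynomial with $f\circ\mct=f$, and put $d=\deg f$. Fix $P\in C$ and a tangent vector $v$ to $C$ at $P$. By strict convexity every point $A\ne P$ of the tangent line $\ell_P$ lies in $S$, and for $A$ on the appropriate side of $P$ the right tangency point of $A$ is $P$, so $\mct(A)=2P-A$. Writing $A=P+sv$, the invariance $f(\mct A)=f(A)$ reads $f(P+sv)=f(P-sv)$ for $s$ in a one-sided neighbourhood of $0$, hence, both sides being polynomials in $s$, identically in $s\in\cc$. Thus the restriction of $f$ to each tangent line of $C$ is an even function of the affine parameter centred at the point of tangency; call this the \emph{symmetry property} of $(C,f)$. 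Comparing the coefficients of $s^1$ gives $\langle\nabla f(P),v\rangle=0$ for all $P\in C$, i.e. $f|_C$ is constant; subtracting this constant from $f$ (which changes neither the degree nor the symmetry property) we may assume $C\subset\{f=0\}$, so $C$ lies on a real algebraic curve. Finally, the coefficient of $s^d$ in $f(P+sv)$ equals $f_d(v)$, where $f_d$ is the leading form of $f$; if $d$ were odd this odd-order coefficient would vanish for every tangent direction $v=v(t)$, and since those exhaust $\rr\mathbb P^1$ we would get $f_d\equiv0$, a contradiction. Hence $d$ is even, and on each tangent line $f$ is an even polynomial in $s$ with a zero at $s=0$, so apart from that zero the intersection points of $\ell_P$ with $\{f=0\}$ come in pairs symmetric about $P$.

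\textbf{Complexification.} Let $\widehat C\subset\cp^2$ be the Zariski closure of the complexification of the irreducible component of $\{f=0\}$ carrying $C$; it is an irreducible complex projective curve containing $C$ as a Zariski-dense subset. The symmetry property is an algebraic identity in $P$ holding on the real arc $C$, so it persists at the generic point of $\widehat C$: for generic $P\in\widehat C$ the projective tangent line $\ell_P$ meets $\widehat C$ at $P$ with multiplicity at least $2$, and its remaining intersections with $\widehat C$ in $\cc^2$ split into pairs $\{A,B\}$ with $P$ the midpoint of $[A,B]$ --- a relation that reads projectively as the cross-ratio $(A,B;P,\ell_P\cap L_\infty)=-1$, where $L_\infty$ is the line at infinity. (When $\{f=0\}$ is reducible some care is needed; one reduces to the irreducible case by taking $f$ of least degree among all polynomial integrals.)

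\textbf{The curve is a conic; conclusion.} The core of the argument --- which I would state separately as a theorem on planar curves with the symmetry property, as the title of the paper indicates --- is that $\deg\widehat C=2$. Assuming $\deg\widehat C\ge3$, one analyses $\widehat C$ near its asymptotic directions, the points of $\widehat C\cap L_\infty$: as $P$ runs along $\widehat C$ so that the direction of $\ell_P$ tends to such a point $I$, exactly one member of each symmetric pair of intersections of $\ell_P$ escapes to infinity while the others remain bounded, and bookkeeping of how many points escape together with the parity and symmetry constraints forces relations among the Puiseux exponents and local intersection multiplicities of $\widehat C$ at $I$ that cannot hold simultaneously for all $I$ when $\deg\widehat C\ge3$. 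This local-to-global analysis at infinity is where essentially all the work lies and is, I expect, the main obstacle. Once $\deg\widehat C=2$ is established, $C$ is an arc of a real conic; among real conics the only closed strictly convex curves are the ellipses, so $C$ is an ellipse. $\Box$
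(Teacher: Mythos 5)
Your first two steps are sound and coincide with the paper's setup: the evenness of $f$ along tangent lines, the vanishing of the tangential derivative giving $f|_C\equiv\mathrm{const}$ (Proposition \ref{f=c}), the normalization $f|_C=0$, and the analytic continuation of the symmetry of the tangent-line intersections to the complexified curve. But the third step, which you yourself identify as "where essentially all the work lies," is not a proof: it is an assertion that some bookkeeping at infinity rules out $\deg\widehat C\ge3$, and the one concrete claim you make there -- that exactly one member of each symmetric pair escapes to infinity while the others stay bounded -- is not what actually happens. In the paper's analysis, as the tangency point $t$ approaches a point $A\in\oci$, the line $T_t b$ meets every germ of $\{f=0\}$ at $A$ that is tangent to $b$ with the same projective Puiseux exponent in points whose coordinates blow up proportionally to $x(t)$ (Proposition \ref{asym}, Corollary \ref{casym}), so whole families of intersection points escape together, and the symmetry constraint must be disentangled from the contributions of all these germs; this is the content of Theorem \ref{tsym}, which only yields that each transverse branch at infinity is \emph{quadratic}. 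That conclusion alone does not force $\deg\widehat C=2$: the paper needs two further ingredients that your sketch never touches. First, Theorem \ref{relsym}: no singular and no inflection points of the complexified curve in the affine chart, proved via constancy of $H(F)=F_{xx}F_y^2-2F_{xy}F_xF_y+F_{yy}F_x^2$ along the curve, where one must take $F=f^{1/m}$ because $f$ may vanish on the component to order $m>1$ (the Bialy--Mironov refinement); your "take $f$ of least degree" remark does not dispose of this, since even a minimal-degree integral can have the form $g\Psi^m$ or extra components, and in any case the symmetry of intersections holds only relative to the full curve $\{f=0\}$, not to the single component $\widehat C$ as you state. Second, the purely algebro-geometric Theorem \ref{shust}, proved with Pl\"ucker and genus formulas, which converts "regular and inflection-free in $\cc^2$ plus subquadratic transverse branches at infinity" into "conic." Without these two inputs your concluding paragraph has no support.

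A smaller but real gap: since $C$ is only $C^4$-smooth, it need not lie in a single irreducible component of $\{f=0\}$ a priori; it could be pieced together from arcs of several algebraic curves. The paper excludes this at the end (Subsection 4.3) by noting that adjacent arcs would have contact of order at least $5$, exceeding the possible intersection index of distinct conics, so $C$ lies in one conic; your phrase "the irreducible component of $\{f=0\}$ carrying $C$" silently assumes what must be argued. The final step (a closed strictly convex curve contained in a conic is an ellipse) is fine.
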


A particular case of Theorem \ref{tab} under some non-degeneracy assumptions
 was proved by S.L.Tabachnikov  \cite[theorem 1]{tab08}. These assumptions imply in particular
 that the complex projective Zariski closure of the curve $C$ is  non-singular.
See the end of Subsection 1.2 for more details.

We also prove the following more general theorem on non-convex curves generating
multivalued outer billiard mappings, see the next definition.

\begin{definition} A $C^k$-smoothly immersed curve
$C\subset\rr^2$ {\it generates a polynomially integrable multivalued outer billiard,} if
there exists a non-constant polynomial  $f(x,y)$ such that for every $P\in C$ and every $A,B\in T_PC$ symmetric with respect to the point $P$ one has
$f(A)=f(B)$. The latter polynomial is called an {\it integral} of the multivalued outer billiard constructed on the curve $C$.
\end{definition}

\begin{theorem} \label{tabmult} Let $C\subset\rr^2$ be a $C^4$-smoothly immersed image of either an interval,  or a circle, that does not lie in a line.
Let $C$ generate a polynomially integrable multivalued outer billiard.
Then the curve $C$ lies in  a conic.
\end{theorem}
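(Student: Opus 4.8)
The plan is to turn the symmetry property into an algebro-geometric statement about the (complexified, projective) Zariski closure of $C$, and then to pin that curve down by forcing it to inherit many affine symmetries. Parametrize $C$ by an immersion $s\mapsto\gamma(s)$. The hypothesis says that for each $s$ the polynomial $t\mapsto f(\gamma(s)+t\gamma'(s))$ is \emph{even} in $t$, equivalently $D^{2k+1}f(\gamma(s))\big[\gamma'(s)^{\otimes(2k+1)}\big]=0$ for all $k\ge0$. The case $k=0$ gives $\langle\nabla f,\gamma'\rangle\equiv0$ along $C$, so $f\equiv c$ on $C$ for a constant $c$; hence $C$ lies in the algebraic curve $\{f=c\}$, and I let $\bar\Gamma\subset\cp^2$ be the Zariski closure of $C$. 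Using that $C$ is connected, $C^4$-smooth and not in a line one checks that $\bar\Gamma$ is irreducible of some degree $d\ge2$ and that it suffices to treat a single arc in an irreducible curve: if $\overline C^{\,\mathrm{Zar}}$ broke into components, $C$ would pass between them at a point where, by $C^4$-smoothness, the two components have intersection multiplicity at least $5$, which is impossible for two distinct conics — and running the whole argument on each component makes each a conic. So it suffices to prove $d=2$, and the original $C$ then lies in that conic.

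Next I would upgrade the symmetry property from the tangent lines of the arc $C$ to \emph{all} tangent lines of $\bar\Gamma$. Writing $\bar\Gamma=\{G=0\}$ with $G$ irreducible, the tangent direction at a smooth point $P$ is the polynomial field $v(P)=(-G_y,G_x)(P)$, and the identities $D^{2k+1}f(P)\big[v(P)^{\otimes(2k+1)}\big]=0$ hold on the Zariski-dense subset $C\subset\bar\Gamma$, hence identically on $\bar\Gamma$. Thus $f|_{\ell_P}$ is even about $P$ for every smooth non-flex $P\in\bar\Gamma$, where $\ell_P=T_P\bar\Gamma$; reading off the leading $t$-coefficient gives that $\deg f$ is even. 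At this point I must dispose of one genuine technical point: I want the evenness statement to be purely about $\bar\Gamma\cap\ell_P$, i.e.\ to replace $f-c$ by a constant multiple of a power of $G$. If on the tangent lines the residual factor of $f-c$ never "cross-pairs" with $G$, this is immediate and $G$ itself has the symmetry property; ruling out persistent cross-pairing is where I expect real work to be needed.

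Granting this, write $\bar\Gamma=\{F(x,y)=0\}$ in affine coordinates in which a fixed \emph{generic} point $q$ at infinity — in particular not an asymptotic direction of $\bar\Gamma$ — is the $x$-direction; then $F$ has $x$-degree $D=\deg F$ (even), its leading $x$-coefficient $a_D$ is a nonzero constant, and $\pi_q\colon\bar\Gamma\to\cp^1_y$ is a degree-$D$ map with, for generic $q$, only simple ramification, occurring at $N\ge2D-2$ values $y_1,\dots,y_N$ (Riemann–Hurwitz on the normalization) at smooth non-flex points of $\bar\Gamma$. The fibre of $\pi_q$ over $y_j$ is $\bar\Gamma\cap\ell$ for the tangent line $\ell=\{y=y_j\}$, whose contact point has $x$-coordinate $p_j$; by the symmetry property this divisor, $D$ points $p_j,p_j,R_3,\dots,R_D$, is symmetric about $p_j$. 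Vieta applied to $F(\cdot,y_j)$ gives $\sum x_i=-a_{D-1}(y_j)/a_D=Dp_j$, so $p_j=p(y_j)$ for the polynomial $p(y):=-a_{D-1}(y)/(Da_D)$, which has degree $\le1$. Substituting $x\mapsto x+p(y)$, the polynomial $\widetilde F(x,y):=F(x+p(y),y)$ satisfies: $\widetilde F(\cdot,y_j)$ is even in $x$ for every $j$; hence each coefficient $\widetilde a_\ell(y)$ of $x^\ell$ with $\ell$ odd, which has $\deg_y\widetilde a_\ell\le D-\ell\le D-1<2D-2$, vanishes at $N\ge2D-2$ values of $y$ and is therefore $\equiv0$. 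So $\widetilde F(x,y)=H(x^2,y)$, i.e.\ $F(x,y)=H\big((x-p(y))^2,y\big)$, which says precisely that $\bar\Gamma$ is invariant under the affine reflection $\sigma_q\colon(x,y)\mapsto(2p(y)-x,y)$ in the direction $q$.

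Finally, letting $q$ range over the Zariski-dense set of generic directions, $\bar\Gamma$ is invariant under infinitely many pairwise distinct affine reflections $\sigma_q$; the products $\sigma_q\sigma_{q_0}$ (whose linear parts have determinant $1$) generate an infinite group inside $\mathrm{SAff}(2,\cc)=SL_2(\cc)\ltimes\cc^2$, so its Zariski closure $H$ is a positive-dimensional subgroup of $\mathrm{SAff}(2,\cc)$ preserving $\bar\Gamma$. A short case analysis — a positive-dimensional subgroup of $\mathrm{SAff}(2,\cc)$ contains a one-parameter subgroup conjugate either to the standard torus $\{\mathrm{diag}(t,t^{-1})\}$ or to a standard one-parameter unipotent (possibly with a translation part), whose orbit closures are lines, the conic $\{xy=\mathrm{const}\}$, or parabolas — shows that the only irreducible $H$-invariant plane curves are lines and conics. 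As $\bar\Gamma$ is not a line, it is a conic, and by the first paragraph $C$ lies in it; this proves Theorem~\ref{tabmult}. The main obstacle, as flagged, is the reduction in the second paragraph (removing the extraneous factors of $f-c$, i.e.\ excluding cross-pairing on the tangent lines); the Vieta/degree bookkeeping of the third paragraph is then essentially mechanical, and the group-theoretic classification of the last paragraph is routine.
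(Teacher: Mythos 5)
Your own flag in the second paragraph is the fatal gap, and it is not a removable technicality: the evenness hypothesis only makes the divisor cut on $T_P\bar\Gamma$ by the \emph{full} level curve $\{f=c\}$ symmetric about $P$. Writing $f-c=g\,\Psi^m$ with $\Psi$ the irreducible equation of $\bar\Gamma$, the factor $g$ need not be constant, and its zeroes on the tangent lines can pair with zeroes of $\Psi$ under the central symmetry; nothing in your argument excludes this persistent cross-pairing, and without excluding it the Vieta identity $\sum_i x_i=Dp_j$ for $F=\Psi$ (the key input of your third paragraph) simply fails. This is exactly the obstruction the paper is built around: it never attempts to show that $\bar\Gamma\cap T_P\bar\Gamma$ alone is symmetric, but works throughout with the \emph{relative} symmetry with respect to the bigger curve $\Gamma=\{f=c\}$ (and its local multigerm at infinity), extracting consequences first from the order-$3$ Taylor coefficient of $F=f^{1/m}$ along tangent lines (constancy of $H(F)$ on the curve, giving no affine singular or inflection points, Theorem~\ref{relsym}), then from Puiseux asymptotics of $T_tb\cap\Gamma$ at infinity (quadraticity of transverse branches, Theorem~\ref{tsym}), and finally from Pl\"ucker-type formulas (Theorem~\ref{shust}). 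So the step you hoped was "where real work is needed" is in fact the whole problem, and your reduction of it to a statement about $\bar\Gamma$ alone is not available.

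There is a second, independent gap in the counting of your third paragraph: the number $N$ of generic tangent lines through the projection center $q$ is the \emph{class} of $\bar\Gamma$, not $2D-2$; Riemann--Hurwitz gives $N=2D-2+2g-\sum_b\bigl(\text{ramification of singular branches, including those on }\oci\text{ and over }y=\infty\bigr)$, which can be far smaller than $D$ (e.g.\ duals of smooth curves have degree $n(n-1)$ and class $n$). To guarantee $N\ge D$ you would already need to know that $\bar\Gamma$ has no affine singular or inflection points and to control its branches at infinity — i.e.\ the content of Theorems~\ref{relsym} and~\ref{tsym}, which your proposal neither proves nor invokes. (Your final reflection-group argument, and the adjacency/contact-order-$5$ reduction to a single irreducible component, are fine in themselves, the latter being essentially the paper's own last step; but they rest on the two unsupported steps above.)
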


Theorem \ref{tab} follows from Theorem \ref{tabmult}.

For the proof of Theorem \ref{tabmult} we consider the complexification $\cc^2$ of the real plane and the ambient projective plane $\cp^2\supset\cc^2$. Let $\oci$ denote the infinity line:
 $$\oci=\cp^2\setminus\cc^2.$$
It is well-known that the polynomial integral $f$ is constant on $C$ (cf. \cite{tab08}, see Proposition \ref{f=c}
below). This implies that the projective complex Zariski closure $\gamma$
of the curve $C$ is an algebraic curve lying in a level curve of the complex polynomial $f$; say, in the zero
level curve $\Gamma=\{ f=0\}$, provided that $f|_C\equiv 0$. It suffices to show that each  nonlinear
irreducible component $\alpha$ of the curve $\gamma$ is a conic.
S.Tabachnikov's arguments \cite{tab08} together with their generalization
due to M.Bialy  and A.E.Mironov \cite[section 6]{bm} imply that all the singular and inflection points (if any) of the curve $\gamma$ lying in $\alpha$ are contained in the infinity line $\oci$
(Theorem \ref{relsym} in Section 2). The curve $\gamma$ has the
so-called relative symmetry property: for every point $t\in\gamma\cap\cc^2$
the central symmetry of the affine line $T_t\gamma$ with respect to the point $t$ permutes the points of its
intersection with a bigger complex algebraic curve $\Gamma\supset\gamma$
(follows from definition and analyticity).
We study the local branches of the curve
$\alpha$ at points of the intersection $\gamma\cap\oci$. Each local branch is a nonlinear irreducible germ
of analytic curve, thus it admits a local holomorphic parametrization by small complex parameter $t$,
$$t\mapsto(t^q,ct^p(1+o(1))), \ p,q\in\mathbb N, \ p>q\geq1, \ c\neq0$$
in local affine chart centered at the base point.
We show that the relative symmetry property (and even its weaker, local version)
implies that each local branch
transverse to $\oci$ is {\it quadratic:}   $p=2q$
(Theorem \ref{tsym} and its corollary).
This is done in Section 3 via asymptotic analysis of  the relative symmetry property. Finally,
$\alpha$ has the two following properties: all its singular and inflection points
(if any) lie in the infinity line $\oci$; each its local branch transverse to $\oci$ is quadratic. This together with the
next new algebro-geometric  theorem proved in  Section 4 will imply Theorem \ref{tabmult}.

\begin{theorem} \label{shust} Let a nonlinear irreducible algebraic curve $\alpha\subset\cp^2$ have neither singular, nor inflection points in an affine chart
$\cc^2\subset\cp^2$. Let each of
its local branches at every point in $\alpha\cap\oci$ that is transverse to $\oci$ (if any)
be subquadratic:  $p\leq 2q$ in its above parametrization.
Then $\alpha$ is a conic.
\end{theorem}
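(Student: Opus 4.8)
The plan is to determine the degree $d=\deg\alpha$ by playing off a pair of Plücker-type identities against the non-negativity of the geometric genus. Write $d^{*}$ for the class of $\alpha$ (the degree of the dual curve $\alpha^{*}$) and $g$ for the geometric genus. The two hypotheses say that in $\cc^{2}$ the curve $\alpha$ is smooth with no inflections, so every local branch of $\alpha$ at an affine point has parametrization exponents $(q,p)=(1,2)$; hence all singular and inflectionary data of $\alpha$ — and, as we shall see, of $\alpha^{*}$ — lives on the finitely many branches of $\alpha$ along $\oci$. I will index the transverse branches at $\oci$ by $i\in I$ with data $(q_{i},p_{i})$, $q_{i}<p_{i}\le2q_{i}$ (the hypothesis), so that $q_{i}$ is the multiplicity and $I(B_{i},\oci)=q_{i}$; and the tangent ones by $j\in J$ with data $(q_{j},p_{j})$, $q_{j}<p_{j}$, so that $q_{j}$ is the multiplicity, $\oci$ is the tangent line of the branch, and $I(B_{j},\oci)=p_{j}$.

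First I would record the class formula in the shape $d^{*}=2d+2g-2-\sum_{B}(m_{B}-1)$, the sum over all local branches of $\alpha$ and $m_{B}$ the multiplicity of $B$ — this is the usual $d(d-1)-\sum_{P}(\mu_{P}+m_{P}-1)$ rewritten via the genus formula and Milnor's formula, and by the above only the branches on $\oci$ contribute. Applying the same identity to $\alpha^{*}$ — whose class is $d$ and whose genus is again $g$ by reflexivity over $\cc$ — and using that the branches of $\alpha^{*}$ are exactly the dual branches of those of $\alpha$, with the dual of a branch with data $(q_{B},p_{B})$ having multiplicity $p_{B}-q_{B}$ (obtained by writing the tangent line of $t\mapsto(t^{q_{B}},ct^{p_{B}}(1+o(1)))$, in dual affine coordinates, as $t\mapsto(\mathrm{const}\cdot t^{\,p_{B}-q_{B}}(1+o(1)),\ \mathrm{const}\cdot t^{\,p_{B}}(1+o(1)))$), I obtain a second identity; subtracting the two gives
$$3(d^{*}-d)=\sum_{i\in I}(p_{i}-2q_{i})+\sum_{j\in J}(p_{j}-2q_{j}).$$
Next, Bézout's theorem gives $d=\sum_{i\in I}q_{i}+\sum_{j\in J}p_{j}$ (the intersection numbers differ: $q_{i}$ for transverse, $p_{j}$ for tangent branches). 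Eliminating $d^{*}$ between the last display and the class formula for $\alpha$, and substituting the Bézout expression for $d$, all quantities except $g$ cancel and one is left (a sign flips for the tangent branches because of the mismatch in Bézout) with the \emph{master equation}
$$6g=6-3N+\sum_{i\in I}(p_{i}-2q_{i})+\sum_{j\in J}(q_{j}-2p_{j}),\qquad N=|I|+|J|\ge1.$$

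Now the hypotheses bite. For $i\in I$ the summand $p_{i}-2q_{i}$ is $\le0$, and for $j\in J$ one has $q_{j}<p_{j}$, so $q_{j}-2p_{j}\le-p_{j}-1\le-3$. Since $g\ge0$, the master equation forces $|I|+2|J|\le2$, leaving exactly three configurations of branches at infinity, and I would dispatch each with a short computation that also invokes the genus formula $g=\binom{d-1}{2}-\sum_{P}\delta_{P}$ together with Noether's bound $\delta_{B}\ge\binom{m_{B}}{2}$ for the $\delta$-invariant of a branch of multiplicity $m_{B}$ (and the sharper $\delta_{B}\ge m_{B}(m_{B}-1)$ when $p_{B}=2q_{B}$, since then the branch still has multiplicity $m_{B}$ after one blow-up). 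If $|I|=1$, $|J|=0$: the master equation forces $g=0$ and $p_{1}=2q_{1}-3$, while the genus formula then needs $\delta_{P}=\binom{q_{1}-1}{2}$, contradicting $\delta_{P}\ge\binom{q_{1}}{2}$ unless $q_{1}=1$, i.e.\ $d=1$, excluded since $\alpha$ is nonlinear. If $|I|=2$, $|J|=0$: then $g=0$ and both summands vanish, so $p_{i}=2q_{i}$ and the $\delta$-count (using $\delta_{B_{i}}\ge q_{i}(q_{i}-1)$ and $\sum_{P}\delta_{P}\ge\sum_{B}\delta_{B}$) forces $(q_{1}-q_{2})^{2}+q_{1}+q_{2}\le2$, hence $q_{1}=q_{2}=1$ and $d=2$. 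If $|I|=0$, $|J|=1$: then $g=0$ and $q_{1}=2p_{1}-3$, which with $q_{1}<p_{1}$ gives $p_{1}=2$, $q_{1}=1$, so $d=2$. In the two surviving cases $\alpha$ has degree $2$ and is irreducible, hence is a smooth conic.

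The step I expect to be most delicate is the local bookkeeping at $\oci$ under duality: one must verify carefully that every branch of $\alpha^{*}$ is the dual of a unique branch of $\alpha$ (so that the class formula applied to $\alpha^{*}$ genuinely records the same local data), and that the dual branch of a branch with exponents $(q,p)$ has multiplicity exactly $p-q$; one must also keep the two bookkeeping quantities $I(B,\oci)$ and $m_{B}$ straight, as they differ between transverse and tangent branches. Once this dictionary is in place, everything else — the master equation and the three-case endgame — is elementary arithmetic with binomial coefficients and $\delta$-invariants.
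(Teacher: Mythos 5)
Your argument is correct, and it reaches the conclusion by a genuinely different route than the paper. The paper works with the Hessian: it combines the formula $h(\alpha,A)=3\kappa(\alpha,A)+\sum_i(s^*(b_i)-s(b_i))$ with B\'ezout for the Hessian curve ($3d(d-2)=\sum_A h(\alpha,A)$), the genus bound $\sum\delta\le\frac{(d-1)(d-2)}{2}$ and $\kappa=2\delta+\sum(s(b_i)-1)$, and then finishes the two-transverse-branch case with a Newton-polygon argument (the defining polynomial is forced to have Newton segment $[(0,0),(d/2,d/2)]$, hence factors as $\prod(xy-\lambda)$) and the one-point cases with B\'ezout applied to a tangent line. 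You instead apply the class formula $d^*=2d+2g-2-\sum_B(m_B-1)$ twice -- to $\alpha$ and, via reflexivity, the birational Gauss map and the branch-duality dictionary $(q,p)\mapsto(p-q,p)$, to $\alpha^*$ -- and eliminate $d^*$ using B\'ezout along $\oci$; your ``master equation'' is correct (I checked the arithmetic, and it is consistent on the parabola and hyperbola), and with $g\ge0$ it yields exactly the paper's key inequality (\ref{ed9}), $|{\mathcal B}_{tr}|+2|{\mathcal B}_{tan}|\le2$, so the two arguments are parallel up to that point (unsurprisingly: the Hessian identity (\ref{ed6}) encodes the same information as the two class formulas, since inflections of $\alpha$ are cusps of $\alpha^*$). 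Your endgame, however, replaces the paper's Newton-polygon and tangent-line B\'ezout arguments by $\delta$-invariant estimates from the multiplicity sequence (Noether's $\delta_B\ge\binom{m_B}{2}$, sharpened to $\delta_B\ge q(q-1)$ when $p=2q$, which is right: the strict transform after one blow-up of $(t^q,ct^{2q}(1+o(1)))$ again has multiplicity $q$), and this handles uniformly the one-point and two-point configurations that the paper treats separately; in the case $|I|=1$, $|J|=0$ you could also conclude even faster, as the paper does, since the tangent line to the branch meets $\alpha$ with multiplicity $p_1=2d-3>d$. The ingredients you lean on -- the generalized class formula $d(d-1)-\sum(\mu_P+m_P-1)$, biduality and the bijection of branches under the Gauss map in characteristic zero, Milnor's and Hironaka's formulas, and Noether's formula for $\delta$ -- are standard and of the same depth as the paper's toolbox, so the point you flag as delicate (the duality bookkeeping, including that duals of smooth non-inflectional affine branches have multiplicity one and so drop out) is indeed the only thing to verify, and it is classical. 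What your approach buys is a cleaner, coordinate-free finish (no Newton-polygon/factorization step); what the paper's buys is that it avoids dual curves and reflexivity altogether, staying with intersection-theoretic invariants of $\alpha$ itself.
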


The proof of Theorem   \ref{tabmult} will be given in Subsection 4.3.

\begin{remark} It is well-known that polynomial integrability of a Birkhoff billiard $\Omega$ is equivalent to the existence of a non-trivial
first integral of the billiard flow in a neighborhood in $T\rr^2|_{\overline{\Omega}}$ of the zero section of the bundle $T\rr^2|_{\partial\Omega}$
 that is analytic in the speed. In more detail here analyticity is required to be uniform
 so that the convergence radius of its Taylor series in the speed be  uniformly
 bounded from below. A direct reformulation of this assertion for the outer billiards would state that the existence
of an analytic first integral of an outer billiard in a neighborhood of its boundary implies the existence of a polynomial integral. This is unknown, but  this would reduce  the Tabachnikov's Conjecture for analytically integrable outer
billiards to the polynomial case and hence, together with
the present results, would prove it. To our opinion,  the analytic Tabachnikov's Conjecture
seems to be of a level comparable to that of the general smooth case.
\end{remark}

\subsection{Historical remarks}

 The classical Birkhoff Conjecture was studied by many mathematicians starting from the famous paper by H.Poritsky  \cite{poritsky}, where he  stated it in print  and
 proved it under the additional assumption that
 the billiard in each closed caustic near the boundary has the same closed caustics, as the initial billiard
 (see also  \cite{amiran}). Here we will mention just few of most known results.
In 1993 M.Bialy \cite{bialy} proved that if the phase cylinder of the billiard is
 foliated (almost everywhere) by continuous curves which are invariant under the billiard map,  then the boundary curve is a circle (see also
\cite{wojt}). See also Bialy's papers
\cite{bialy1, bialy2} for similar results on billiards in constant curvature and on magnetic billiards respectively.
D.V.Treschev's experiences  \cite{treshchev, tres2, tres3} provide a numerical
evidence of existence of the so-called locally integrable billiards,  where a germ of the second iterate of the
billiard map is conjugated to a rigid rotation (in two and higher dimensions).
 Recently V.Kaloshin and A.Sorrentino have proved a {\it local version} of the Birkhoff Conjecture \cite{kalsor}:
 {\it an integrable deformation of
 an ellipse is an ellipse} (see \cite{kavila} for the case of ellipses with small eccentricities). Birkhoff Conjecture motivated the so-called
{\it Algebraic Birkhoff Conjecture}, which deals with the {\it polynomially integrable planar billiards,}
where the billiard geodesic flow has a first integral that depends polynomially
on the speed components and is non-constant on the unit level hypersurface of the norm of the speed. It states that the only polynomially
integrable convex billiard with smooth boundary is an ellipse. The study of the Algebraic Birkhoff Conjecture and its appropriate generalization to (not necessarily convex) billiards with piecewise smooth boundary on surfaces of constant curvature was started  by S.V.Bolotin in
 \cite{bolotin, bolotin2}, see also a survey in \cite[chapter 5, section 3]{kozlov}. It was proved for polynomial integrals of degrees
 up to 4 in \cite{bm2}. Its complete proof for billiards on  any surface of constant curvature
 was recently obtained as a result of the three following papers:
 two joint papers by M.Bialy and A.E.Mironov \cite{bm, bm3}; a very recent paper \cite{gl}
 (see also its short version \cite{gl2}) of the first author of  the present article.
  For more detailed surveys on Birkhoff Conjecture and its algebraic version see \cite{bm, gl, kalsor} and references therein.

For outer billiards a particular case of Theorem \ref{tab} was proved by S.L.Tabachnikov  \cite[theorem 1]{tab08} under the following additional assumptions:

(i) the complex
Zariski closure of the curve  $C$ in $\cp^2$ is a non-singular algebraic curve;

(ii) the gradient $\nabla F$ of the polynomial integral $F$ does not vanish identically on $C$.

Tabachnikov
introduced a powerful
method that allowed him to show that under assumptions (i) and (ii)
all the singular and inflection points (if any) of the complex projective
Zariski closure of the curve $C$ lie in the infinity line. M.Bialy and A.E.Mironov introduced  a modified version of
Tabachnikov's method that allowed them to prove the same result in the general case
(their private communication, see Theorem \ref{relsym} below and its proof) and to prove a similar statement in the context of the Algebraic Birkhoff Conjecture in constant curvature \cite{bm, bm3}.

\begin{remark} The second part of the proof of the Algebraic Birkhoff Conjecture given in \cite{gl, gl2} uses results
of  \cite{bm, bm3} and techniques elaborated in the present article and in  the previous paper of the first author \cite{alg}.
\end{remark}

\section{Complexification of the curve $C$: singularities and inflection points}

The following proposition is a version of a  result from \cite{tab08}.

  \begin{proposition} \label{f=c}  Let $C\subset\rr^2$ be a $C^1$-smoothly immersed
  image of  either an interval, or a circle. Let $C$
generate a polynomially integrable multivalued outer billiard with the integral
$f(x,y)$.  Then one has $f|_C\equiv const$.
\end{proposition}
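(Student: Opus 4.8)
The plan is to show that the polynomial $f$ is constant along $C$ by differentiating the defining relation of the multivalued outer billiard along the curve and extracting an ODE that forces $f|_C$ to be constant. First I would parametrize $C$ by its defining immersion $P = P(s)$ (say, by arc length or any $C^1$ parameter $s$ in the domain interval or circle), so that $P'(s) \neq 0$ is a tangent vector to $C$ at $P(s)$. For a real parameter $\tau$, the two points $A(s,\tau) = P(s) + \tau P'(s)$ and $B(s,\tau) = P(s) - \tau P'(s)$ lie on the tangent line $T_{P(s)}C$ and are symmetric with respect to $P(s)$, so the integral condition gives the identity
\[
f\bigl(P(s) + \tau P'(s)\bigr) = f\bigl(P(s) - \tau P'(s)\bigr)
\]
for all $s$ in the parameter domain and all $\tau$ in a neighborhood of $0$ (indeed for all $\tau$ for which both points stay in the plane, but a neighborhood of $0$ suffices since $f$ is a polynomial and both sides are polynomials in $\tau$).

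Next I would expand both sides as polynomials in $\tau$: writing $g(s,\tau) = f(P(s)+\tau P'(s))$, the identity says $g(s,\tau) = g(s,-\tau)$, i.e. $g$ is even in $\tau$, so all odd-order Taylor coefficients in $\tau$ vanish. The coefficient of $\tau^1$ is $\langle \nabla f(P(s)), P'(s)\rangle$; setting this to zero gives
\[
\frac{d}{ds}\, f\bigl(P(s)\bigr) = \langle \nabla f(P(s)), P'(s)\rangle = 0 .
\]
Hence $f(P(s))$ is constant on each connected piece of the parameter domain. Since the domain is an interval or a circle — in particular connected — it follows that $f|_C \equiv \mathrm{const}$.

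The only point requiring a little care — and the one I expect to be the main (minor) obstacle — is the smoothness bookkeeping: the identity is differentiated once in $\tau$, which is harmless since $f$ is a polynomial, but to conclude $\frac{d}{ds} f(P(s)) \equiv 0$ one uses that $s \mapsto P(s)$ is $C^1$ and $P'(s)$ genuinely spans the tangent line, which is exactly the hypothesis of a $C^1$-smooth immersion. One should also note that "for every $A, B \in T_P C$ symmetric with respect to $P$" in the definition allows $A$ and $B$ to escape any fixed neighborhood, but since $\tau \mapsto g(s,\tau)$ is a polynomial in $\tau$, the parity identity on any nonempty $\tau$-interval propagates to an identity of polynomials, so extracting the linear coefficient is legitimate. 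No projectivization or complexification is needed for this statement; that machinery enters only in the subsequent sections.
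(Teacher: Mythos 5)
Your proposal is correct and follows essentially the same route as the paper: the defining symmetry makes the restriction of $f$ to each tangent line $T_{P}C$ even about $P$, so its derivative along the tangent direction vanishes, and connectivity of the parameter domain gives $f|_C\equiv\mathrm{const}$. The extra bookkeeping with the parametrization $P(s)$ and the polynomial identity in $\tau$ is a harmless elaboration of the paper's one-line argument.
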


\begin{proof} For every point $P\in C$ the restriction of the polynomial integral $f$ to the affine tangent line
$T_PC$ is invariant under the central symmetry with respect to the point $P$, by definition.
Hense, its derivative at $P$ vanishes, as does the derivative at 0 of an even function. Thus, the polynomial
$f$ has zero derivative along a vector tangent to $C$ at $P$. Therefore, $f|_C\equiv const$, by connectivity. This proves the proposition.
\end{proof}

\begin{theorem} \label{relsym}   Let $C\subset\rr^2$ be a $C^1$-smoothly immersed
  image of  either an interval, or a circle, that does not lie in a line and
generates a polynomially integrable multivalued outer billiard.
Its complex projective Zariski closure  $\gamma\subset\cp^2$ is an algebraic curve.
Each nonlinear irreducible component  of the curve $\gamma$
 contains neither singular, nor inflection points of the affine curve $\gamma\cap\cc^2$
\end{theorem}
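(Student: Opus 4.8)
The plan is to reduce the statement to the \emph{relative symmetry property} of the complexified curve and then to exploit analyticity. First, by Proposition \ref{f=c} the integral $f$ is constant on $C$, say $f|_C\equiv 0$; hence the complex projective Zariski closure $\gamma$ of $C$ lies in the algebraic curve $\Gamma=\overline{\{f=0\}}\subset\cp^2$, and $\gamma$ is algebraic since $C$ is a real-analytic (in fact one-dimensional semialgebraic after complexification) subset of a level curve. The defining symmetry condition — for every $P\in C$ and every pair $A,B\in T_PC$ with $P=\frac{A+B}{2}$ one has $f(A)=f(B)$ — is a polynomial identity in the relevant parameters and therefore persists under complexification: for every smooth point $t$ of $\gamma\cap\cc^2$, the central symmetry $\sigma_t$ of the affine tangent line $T_t\gamma$ about $t$ maps $\Gamma\cap T_t\gamma$ to itself. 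This is the relative symmetry property I would record first as a lemma (it is exactly the property the excerpt attributes to S.Tabachnikov and to M.Bialy--A.E.Mironov).

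Next I would fix a nonlinear irreducible component $\alpha$ of $\gamma$ and argue by contradiction that it has no singular or inflection point in $\cc^2$. The key local tool is to compare the tangency order of $T_t\gamma$ with $\alpha$ at $t$ to the tangency order at the symmetric point $\sigma_t(t)=t$ — more precisely, to track, as $t$ ranges over $\alpha\cap\cc^2$, how the finite set $T_t\gamma\cap\Gamma$ behaves. At a generic smooth non-inflection point $t$ of $\alpha$, the tangent line meets $\alpha$ with multiplicity exactly $2$ at $t$ and transversally elsewhere; the symmetry $\sigma_t$ fixes $t$ with multiplicity $2$ and must permute the remaining intersection points with $\Gamma$. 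If $t_0\in\alpha\cap\cc^2$ were an inflection point, then $T_{t_0}\alpha$ meets $\alpha$ at $t_0$ with multiplicity $\ge 3$; following a one-parameter family of tangent lines $T_t\gamma$ as $t\to t_0$ and applying $\sigma_t$, one obtains an extra intersection point of $T_{t_0}\gamma$ with $\Gamma$ colliding into $t_0$, which forces $T_{t_0}\gamma$ to be tangent to $\Gamma$ at $t_0$ to order $\ge 3$ \emph{and} to still be symmetric — iterating this propagates inflection-type tangency along $\alpha$, contradicting that $\alpha$ is an honest algebraic curve of finite degree (a nonlinear irreducible curve has only finitely many inflection points, by the Hessian/Plücker count). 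A singular point is handled analogously, or is reduced to the inflection case by blowing up / passing to local branches: each local branch at a singular point in $\cc^2$ would have a stationary tangent, again producing the forbidden excess tangency stable under $\sigma_t$.

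The main obstacle, as usual in this circle of ideas, is the \emph{bookkeeping of intersection multiplicities under the moving symmetry $\sigma_t$}: one must show that the involution $\sigma_t$ acting on $T_t\gamma\cap\Gamma$ cannot absorb the excess intersection produced by a singular or inflection point of $\alpha$ without creating a contradiction with the global degree of $\Gamma$, and this has to be done uniformly as $t$ varies, which is where analyticity (the fact that everything varies holomorphically in $t$ on the smooth locus of $\alpha$) is essential. I would isolate this as the technical heart, making precise the claim ``an excess tangency at one point of $\alpha$ forces excess tangency at all nearby points'' via the implicit function theorem applied to the incidence variety $\{(t,s): s\in T_t\gamma\cap\Gamma\}$. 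Once that propagation statement is established, irreducibility of $\alpha$ together with the finiteness of the set of singular and inflection points of an algebraic curve closes the argument, and the fact that those finitely many exceptional points must then lie on $\oci$ follows because the propagation argument only breaks down at points where $T_t\gamma$ degenerates, i.e.\ at infinity. This reproduces Theorem \ref{relsym} as stated.
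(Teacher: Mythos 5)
Your opening paragraph coincides with the paper's setup (normalize $f|_C\equiv0$ via Proposition \ref{f=c}, conclude that $\gamma$ lies in the algebraic curve $\{f=0\}$ and is therefore algebraic, and extend the symmetry condition analytically to the complex domain), but the core of your argument is not the paper's, and as sketched it has genuine gaps. You use only the \emph{set-level} symmetry of $T_t\gamma\cap\Gamma$, and your collision mechanism (the reflected point $\sigma_t(s(t))$ colliding into $t_0$) produces a contradiction only at a \emph{simple} inflection point of $\alpha$ through which no other branch of $\Gamma$ passes. If the tangency at $t_0$ has even order, the extra nearby intersection points of $T_t\alpha$ with $\alpha$ can be permuted among themselves by $\sigma_t$ and no excess intersection is created; and if another component of $\Gamma$ (for instance of $\{g=0\}$, where $f=g\Psi^m$, or another component of $\gamma$) passes through $t_0$, it can absorb the reflected points. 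The second situation is precisely what the theorem must exclude: a point of $\alpha$ that is smooth and non-inflectional on $\alpha$ but singular for $\gamma$ (a node where another component crosses $\alpha$); there your reduction of singular points to ``stationary tangents of local branches'' simply does not apply, and no contradiction arises from counting. Moreover the step you yourself flag as the technical heart --- ``excess tangency at one point propagates to all nearby points'' --- is unproved and, I believe, not true: inflection points of an algebraic curve are isolated, and their presence is perfectly compatible with the set-level symmetry at nearby points. The closing claim that the exceptional points must then lie on $\oci$ ``because the propagation breaks down where $T_t\gamma$ degenerates'' has no content: the theorem asserts nothing about points at infinity, and tangent lines do not degenerate there.

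The paper's proof (due to Tabachnikov, Bialy and Mironov) runs along entirely different lines and uses the \emph{value-level} symmetry, i.e.\ that $f|_{T_P\alpha}$ is an even function about $P$, not merely that the zero set is symmetric. Writing $f=g\Psi^m$ with $\Psi$ an irreducible equation of $\alpha$ and setting $F=f^{1/m}$ (the Bialy--Mironov device that handles $m>1$), evenness of $U(x,y,\varepsilon)=F(x+\varepsilon F_y,\,y-\varepsilon F_x)$ in $\varepsilon$ forces its $\varepsilon^3$-Taylor coefficient, which is proportional to the derivative of $H(F)=F_{xx}F_y^2-2F_{xy}F_xF_y+F_{yy}F_x^2$ along the skew gradient, to vanish; hence $H(F)$ is locally constant along $\alpha$. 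Since $H(F)=g^{3/m}H(\Psi)$ on $\alpha$, it vanishes at every affine singular or inflection point of $\alpha$, and it tends to $0$ at every smooth point of $\alpha$ that is singular for $\gamma$ (there $g=0$); so if $\alpha$ contained any such point the constant would be $0$, giving $H(\Psi)|_\alpha\equiv0$ and forcing $\alpha$ to be a line, contrary to nonlinearity. This conservation-law argument bypasses all the case analysis (even-order inflections, nodes, branches of $\{g=0\}$ through the point) that your intersection-counting scheme would still have to supply.
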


A particular case of Theorem \ref{relsym} under assumptions (i) and (ii) from Subsection 1.2 was proved  by
S.L.Tabachnikov in \cite{tab08} (for convex curves $C$, but his argument works without convexity
assumption). M. Bialy and A. E. Mironov
have extended his proof to  the general case by  using their ideas from  \cite[section 6]{bm}.
This proof of Theorem \ref{relsym} due to Tabachnikov, Bialy, and Mironov is given below.

\begin{proof} {\bf of Theorem \ref{relsym}.}
Let $f$ be a polynomial integral normalized so that $f|_{C}\equiv0$. Then $f|_{\gamma}\equiv0$ by definition, and
this implies that $\gamma$ is an algebraic curve.
Let $\alpha\subset\gamma$ be a nonlinear
irreducible component of the curve $\gamma$.
Let $\Psi$ be an irreducible polynomial vanishing on $\alpha$. Then
$$f=g(x,y)\Psi^m(x,y), \ m\in\nn, \ g|_{\alpha}\not\equiv0.$$
Set\footnote{The case, when $m=1$, was treated by S.L.Tabachnikov \cite{tab08}. His arguments
were extended to arbitrary $m$ by M.Bialy and A.E.Mironov, who introduced the function $F$ (their private communication, which repeats their arguments from \cite[section 6]{bm})}
$$F(x,y)=f^{\frac1m}(x,y)=g^{\frac1m}(x,y)\Psi(x,y).$$
The function $F$ is multivalued algebraic, and any two of its holomorphic leaves
differ by multiplication by $m$-th root of unity. Its branching locus
is contained in the curve $\{ g=0\}$.

\def\Sing{\operatorname{Sing}}

For every $P\in\alpha\cap\cc^2$ and for every two points $A,B\in T_P\alpha$ symmetric with respect to the point $P$ one has $f(A)=f(B)$: this equality holds in the real domain with $P\in C$ (by definition) and extends analytically to the complex domain.
This implies that for every point $P\in\alpha$ such that $g(P)\neq0$ and $P$ is not a singular point of the curve
$\alpha$ each leaf of the function $F$ holomorphic on a neighborhood of the point $P$
 has local symmetry property: for every  $A,B\in T_P\alpha$ symmetric with respect to the point $P$ and
 close enough to it one has $F(A)=F(B)$.

Consider the (multivalued)
vector field $V=F_y\frac{\partial}{\partial x}-F_x\frac{\partial}{\partial y}$, which is tangent to the level curves of the function $F$ and
does not vanish identically on $\alpha$.
The above local symmetry property is equivalent to the statement that the function
\begin{equation} U(x,y,\var)=F(x+\var F_y,y-\var F_x)\label{fxy}\end{equation}
is even in $\var$ for all $P=(x,y)\in\alpha\setminus(\Sing(\alpha)\cup\{ g=0\})$.
Equivalently,  its Taylor series in $\var$ should contain only even powers of the variable $\var$. Set
$$H(F)=F_{xx}F_y^2-2F_{xy}F_xF_y+F_{yy}F_x^2.$$
One has
\begin{equation} H(F)|_{\alpha}\equiv const\label{hconst}\end{equation}
locally for each leaf of the function $F$ over $\alpha\setminus(\Sing(\alpha)\cup\{ g=0\})$.
Indeed, for every $P=(x,y)\in\alpha\setminus(\Sing(\alpha)\cup\{ g=0\})$  the Taylor coefficient at $\var^3$ of the function $U(x,y,\var)$, which should vanish,
 equals $\frac{dH(F)}{dV}$ up to constant factor, see
 \cite[lemma 2]{tab08} and \cite[section 6]{bm}. Thus, the latter derivative vanishes, and hence, $H(F)|_{\alpha}\equiv const$,
 since $V$ is tangent to $\alpha$.

The restriction $H(\Psi)|_{\alpha}$ coincides with the value of the Hessian quadratic form
of the function $\Psi$ on its skew gradient. Recall that $H(F)=g^{\frac3m}H(\Psi)$ on the curve
$\alpha=\{\Psi=0\}$ (see also \cite[lemma 2]{tab08}).
Therefore, $H(F)$ vanishes over all the
singular and inflection points of the affine curve $\alpha\cap\cc^2$. For every regular point $A\in\alpha\cap\cc^2$
that is singular for the curve $\gamma$ one has $g(A)=0$, hence, $H(F)(P)\to0$, as $P\to A$ along the curve
$\alpha$. This together with (\ref{hconst}) implies that
$\alpha\cap\cc^2$ should be a straight line, as soon as it contains either a singular, or an inflection point of the
curve $\gamma$.
Thus, since
$\alpha\cap\cc^2$ is not a line by assumption, it contains neither singular, nor inflection points of the curve $\gamma$.
 Theorem \ref{relsym} is proved.
\end{proof}

\section{Relative symmetry property and quadraticity}

\begin{definition} Recall that a {\it local branch} of an analytic curve $\gamma$ at a point $A\in\gamma$
is an irreducible component of its germ at $A$.
\end{definition}

In the present section we study  nonlinear
local branches of the complex projective Zariski closure
$\gamma$ of the curve $C$ at points of the intersection $\gamma\cap\oci$ and show that each branch  transverse to
$\oci$ is quadratic (Theorem \ref{tsym} and its corollary, see Subsection 3.1). Theorem \ref{tsym}
is stated in a more general context, for an irreducible germ $b$ of analytic curve at a point $A\in\oci$ satisfying the
so-called local relative symmetry property introduced in Subsection 3.1.
In the proof of Theorem \ref{tsym} given in Subsection 3.3
we use preparatory results  given partly in \cite{alg}   on the
asymptotics of the intersection points of the tangent line $T_Pb$ with a
given irreducible germ $a$ of analytic curve at $A$, as $P\to A$ (Propositions \ref{pasym}, \ref{asym}
and their corollaries in Subsection 3.2).

\subsection{Relative symmetry property}
 \begin{definition}  \label{def1} Let $\cc^2\subset\cp^2$ be a fixed affine chart. Let $\gamma\subset\cp^2$
 be an irreducible algebraic curve distinct from a line. We say that $\gamma$ has
 {\it relative symmetry property}, if there exists an algebraic curve
$\Gamma\subset\cp^2$ containing $\gamma$
such that for every $t\in\gamma\cap\cc^2$
 the intersection $T_t\gamma\cap\Gamma\cap\cc^2$ is symmetric with respect to the point $t$ as a subset of the affine
complex  line $T_t\gamma\cap\cc^2$: it is invariant under the central symmetry $x\mapsto -x$ in an affine coordinate $x$ on $T_t\gamma$
centered at $t$.
\end{definition}

We will also deal with the local version of the relative symmetry property. To state it, let us introduce the following definition.
\begin{definition} Let $L\subset\cp^2$ be a line, and let $A\in L$.
A {\it $(L,A)$-local multigerm} is a finite union
of distinct irreducible germs of analytic curves $b_1,\dots,b_N$ (called {\it components}) at base points  $A_j\in L$
such that each germ at  $A_j\neq A$ is different from the line $L$. (A germ at $A$ can be arbitrary,
in particular, it may coincide with the germ $(L,A)$.)
The {\it $(L,A)$-localization} of an algebraic curve
in $\cp^2$ is the corresponding $(L,A)$-local
multigerm formed by all its local branches of the above type.
\end{definition}

\begin{definition} Let  $\cc^2\subset\cp^2$ be a given affine chart. Let
$A\in\cp^2$, $b\subset\cp^2$ be a nonlinear irreducible germ of analytic curve at $A$.
The germ $b$ has  {\it (local) relative symmetry property,} if there exists a $(T_Ab,A)$-local
multigerm  $\Gamma$
containing $b$  such that for every $t\in b\cap\cc^2$ close enough to $A$ the intersection $T_tb\cap\Gamma\cap\cc^2$
is symmetric with respect to the point $t$.
\end{definition}

Consider an irreducible  nonlinear germ $b$ of  analytic curve in $\cp^2$ at a given point $A$. Let us choose
affine coordinates $(z,w)$ centered at $A$ so that the tangent line $T_Ab$ be the $z$-axis. Then one can find
a local
bijective parametrization of the germ $b$ by a complex parameter $t\in(\cc,0)$ of the type
\begin{equation}
t\mapsto(t^{q},c_bt^{p}(1+o(1))), \quad
q=q_b,\ p=p_b\in\nn, \ 1\leq q<p, \ c_b\neq0;\label{curve} \end{equation}
$$q=1 \text{ if and only if } b \text{ is a smooth
germ.}$$
In the case, when $b$ is a germ of line, it is parameterized by $t\mapsto(t,0)$, and we set $q_b=1$,
$p_b=\infty$.
\begin{definition}
The {\it projective
Puiseux exponent} \cite[p. 250, definition 2.9]{alg} of a nonlinear germ $b$ is the ratio
$$r=r_b=\frac{p_b}{q_b}.$$
The germ $b$ is called {\it quadratic}, if $r_b=2$, and is called
{\it subquadratic}, if $r_b\le2$. By definition, the projective Puiseux exponent of a germ of line equals infinity.
\end{definition}

\begin{theorem} \label{tsym} Let a nonlinear irreducible germ $b$ of
analytic curve in $\cp^2$ at a point  $A\in\oci$ be transverse to
$\oci$ and have  local relative symmetry property. Then it is quadratic.
\end{theorem}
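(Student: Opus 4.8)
The plan is to fix a convenient affine chart near $A$, parametrize $b$ as in (\ref{curve}), and then exploit the relative symmetry property for the one–parameter family of tangent lines $L_t:=T_{b(t)}b$ as the point $P:=b(t)$ runs to $A$; the whole argument is an asymptotic analysis of the clusters of intersection points $L_t\cap\Gamma$ together with the bookkeeping of how the central symmetry $\sigma_t$ about $P$ permutes them.

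\smallskip
\noindent\emph{Step 1 (reformulation).} Choose homogeneous coordinates with $\oci=\{Y_0=0\}$, $A=[0:1:0]$, $T_Ab=\{Y_2=0\}$; in the chart $(z,w)=(Y_0/Y_1,Y_2/Y_1)$ centred at $A$ one has $\oci=\{z=0\}$, $T_Ab=\{w=0\}$ and $b(\tau)=(\tau^{q},c\tau^{p}(1+o(1)))$, while in the affine coordinates $(\xi,\eta)=(1/z,w/z)$ on $\cc^2$ the germ $b$ becomes the asymptotic curve $\eta^{q}\xi^{p-q}=c^{q}(1+o(1))$ near $A$, the point $P$ has $\xi_P=t^{-q}$, $\eta_P=ct^{p-q}(1+o(1))$, and $L_t$ has slope $m_t=-c\frac{p-q}{q}t^{p}\to0$. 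Restricting $\sigma_t$ to the line $L_t$ and using the affine coordinate $\xi-\xi_P$, the relative symmetry property becomes the statement that the divisor $L_t\cap\Gamma\cap\cc^2$ on $L_t$ is invariant under $\xi-\xi_P\mapsto-(\xi-\xi_P)$; in particular, denoting by $N_t$ the (stabilizing, finite) total intersection number of $L_t$ with $\Gamma$ inside $\cc^2$,
\[
\sum_{R\in L_t\cap\Gamma\cap\cc^2}\xi(R)\;=\;N_t\,t^{-q},
\]
and likewise with $\eta$ in place of $\xi$ and $\eta_P$ in place of $t^{-q}$.

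\smallskip
\noindent\emph{Step 2 (clusters and a Vieta identity).} Using Propositions \ref{pasym}, \ref{asym} and their corollaries, split $L_t\cap\Gamma$ into clusters as $t\to0$: each branch of $\Gamma$ at $A$, resp.\ each finite base point of $\Gamma$ on $T_Ab$, produces a cluster concentrated in $\xi$ at a scale $t^{-\beta}$ fixed by the tangent line and the projective Puiseux exponent of the branch. The only branches whose cluster lies at the \emph{critical} scale $t^{-q}$ (the scale of $P$) are those tangent to $T_Ab$ with projective Puiseux exponent exactly $r=r_b$; for such a branch $b'$ of multiplicity $q'$, leading coefficient $c'$, $\lambda'=c'/c$, the cluster consists of the points $b'(\tau')$ with $\tau'\sim\mu\,t^{p/p'}$, $\mu$ running over the roots of $\lambda'\mu^{p'}-r\mu^{q'}+(r-1)$; for $b'=b$ this equation is $\phi(s):=s^{p}-rs^{q}+(r-1)=0$, with $s=1$ a double root (the point $P$, an ordinary tangency point). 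The $\xi$–leading coefficient of the point attached to $\mu$ is $\mu^{-q'}$, and a short computation with Newton's identities (the top $q'+1$ coefficients of $\lambda'\mu^{p'}-r\mu^{q'}+(r-1)$ do not involve $\lambda'$) shows that the sum of these $\xi$–leading coefficients over the cluster of $b'$ equals $q'\cdot\frac{p}{p-q}$, independently of $c'$.

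\smallskip
\noindent\emph{Step 3 (the numerical identity and conclusion).} Match the $t^{-q}$–coefficient in the displayed identity of Step 1: $\sigma_t$ preserves the scale–$t^{-q}$ cluster through $\chi\mapsto 2-\chi$ on the $\xi$–leading coefficients $\chi$, the exceptional value $\chi=2$ being exchanged with the weaker–scale and finite clusters. Writing $Q$ for the total multiplicity of the exponent–$r$ tangent branches of $\Gamma$, Step 2 and the degree count give
\[
\frac{p}{p-q}\,Q\;=\;r\,Q+\Delta,\qquad\Delta\ge 0,
\]
where $\Delta$ is the total intersection number with $L_t$ of all the remaining branches and finite base points. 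Since $\frac{p}{p-q}-r=\frac{p(2q-p)}{q(p-q)}$, this already forces $p\le 2q$, i.e.\ $r_b\le 2$. To upgrade this to $r_b=2$ one feeds in the $\eta$–version of Step 1 and the precise location of the exceptional $(\chi=2)$ points: tracking $\eta$–leading coefficients through $\sigma_t$ shows that the reflections of the finite and weaker–scale clusters land at $\chi=2$ with one of a short explicit list of $\eta$–leading coefficients, and — using that $\Gamma$ is a finite multigerm, so only finitely many exponent–$r$ branches are available to absorb such points — one concludes $\Delta=0$ and hence $r_b=2$.

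\smallskip
The step I expect to be the main obstacle is Step 3, together with the underlying bookkeeping: one must enumerate all the intersection clusters at every scale, keep track of both the $\xi$– and $\eta$–leading coefficients of each point and of its $\sigma_t$–image, and rule out every "conspiracy" in which several branches of $\Gamma$ jointly restore the symmetry without $b$ being quadratic; the finiteness of the local multigerm $\Gamma$ is exactly what closes off the potential infinite regress of forced auxiliary branches and pins the exponent down to $2$.
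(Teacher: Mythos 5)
Your Steps 1--2 are essentially the paper's own computation in its ``special case'': the cluster analysis via Propositions \ref{pasym}, \ref{asym} and the Newton-identity evaluation of the power sum (your $q'\cdot\frac{p}{p-q}$ is exactly the paper's $\frac{p_i}{r-1}$ in (\ref{sumtq}), (\ref{pt})) are correct. But there are two genuine gaps. First, the claim in Step 2 that the \emph{only} branches contributing intersection points at the critical scale $t^{-q}$ are the tangent branches with Puiseux exponent exactly $r=r_b$ is false: a component of $\Gamma$ tangent to $b$ with strictly larger exponent (including a linear component, e.g.\ the germ of the line $T_Ab$ itself, which the multigerm $\Gamma$ is allowed to contain) contributes $q_a$ points with $x\sim\frac{r}{r-1}x(t)$, by (\ref{a>b1*}). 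These points enter the symmetry bookkeeping with the opposite sign: their reflections have coefficient $\frac{r-2}{r-1}$ and must be absorbed by the exponent-$r$ clusters. The correct identity is therefore not your $\frac{p}{p-q}Q=rQ+\Delta$ with $\Delta\ge0$, but the paper's $(r-2)\Pi=k_2-k_1(r-1)$ (Propositions \ref{prnw}, \ref{pl2}), whose right-hand side has no definite sign; so the inequality $p\le2q$ does not follow in general from your Step 3 as written. (A smaller, fixable point of the same kind: before ``matching the $t^{-q}$-coefficient'' in your sum identity you must first split off the larger-scale clusters coming from transverse branches at $A$ and from tangent branches with smaller exponent; their coordinates are known only to leading order, so they must be removed by observing that they are $\sigma_t$-invariant among themselves.)

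Second, the endgame is missing. Even in the situation where your identity does hold, it only yields $r\le2$, and for $1<r<2$ it forces $k_1=\Pi\frac{2-r}{r-1}>0$ weaker-scale/finite points whose reflections are critical-scale points with coefficient exactly $2$; symmetrically, for $r>2$ one must rule out that the $k_2$ points above are absorbed. Your proposed upgrade --- tracking $\eta$-leading coefficients and invoking ``finiteness of the multigerm'' --- does not do this: finiteness of $\Gamma$ gives no bound on how many roots $\theta_{ij}$ of the trinomials $W_i$ can have $\theta_{ij}^{q_i}=2$ (or $=\frac{r-2}{r-1}$), which is what is actually needed. The paper closes this by an arithmetic root-counting argument (Proposition \ref{propk12}): if $r\neq2$ then $k_1$ or $k_2$ exceeds $\Pi/p$, forcing some $W_i$ to have at least $s_i+1=\gcd(p_i,q_i)+1$ roots with the prescribed $q_i$-th power; but such roots are simple (the roots of $W_i'$ have $q_i$-th powers $0$ or $1$) and any two of them, having equal $q_i$-th and $p_i$-th powers, differ by an $s_i$-th root of unity, so there are at most $s_i$ of them --- a contradiction. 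Without an argument of this kind (or a substitute for it), your proof establishes at best subquadraticity under an extra hypothesis on $\Gamma$, not the theorem.
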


Theorem \ref{tsym} is proved in Subsection 3.3.

\begin{corollary} \label{cortsym} Let a nonlinear irreducible germ $b$ of
 analytic curve in $\cp^2$ at a point  $A\in\oci$ be transverse to $\oci$ and lie in the complex projective
Zariski closure $\gamma$ of a $C^2$-smoothly immersed real  curve  generating a
 polynomially integrable multivalued outer billiard. Then $b$ is quadratic.
 \end{corollary}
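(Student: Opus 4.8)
The plan is to deduce the corollary from Theorem \ref{tsym} by checking that a branch $b$ of the type described satisfies the hypotheses of that theorem, the only nontrivial point being the local relative symmetry property. Let $\gamma$ be the complex projective Zariski closure of the real curve $C$, and let $f$ be a polynomial integral of the multivalued outer billiard on $C$, normalized by $f|_C\equiv0$ as in Proposition \ref{f=c}, so that $\gamma$ lies in the algebraic curve $\Gamma=\{f=0\}$. Fix the affine chart $\cc^2\subset\cp^2$ in which the billiard is defined.

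The core of the argument is the following analytic continuation statement: the relative symmetry property of $\gamma$ with respect to $\Gamma$ (Definition \ref{def1}) holds. Indeed, for every real point $P\in C$ and every pair $A,B\in T_PC$ symmetric with respect to $P$ one has $f(A)=f(B)$ by the definition of a polynomially integrable multivalued billiard; since $C$ does not lie in a line, $T_PC$ is not a fixed line, and as $P$ ranges over $C$ the tangent lines $T_PC$ sweep a two-real-parameter family. Fixing an irreducible nonlinear component $\alpha$ of $\gamma$, parametrize $\alpha$ near a real arc by a complex parameter and let $P=P(s)$ be an analytic parametrization with $s$ real corresponding to $C$. For fixed $P$, the condition ``$f$ is even on $T_P\alpha$ in the affine coordinate centered at $P$'' is a polynomial identity in that coordinate; each of its (finitely many) coefficients is a real-analytic function of $s$ that vanishes identically on the real locus, hence vanishes on a complex neighborhood by the uniqueness theorem. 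Thus $T_t\alpha\cap\Gamma\cap\cc^2$ is symmetric with respect to $t$ for all complex $t\in\alpha$ near the real arc. Since $\alpha$ is irreducible, this symmetry propagates to all of $\alpha\cap\cc^2$: the relevant identities among the coefficients of $f|_{T_t\alpha}$, now viewed as functions of the parameter on the smooth part of $\alpha$, are analytic and vanish on an open subset, hence everywhere. Therefore $\alpha$ — and in particular the nonlinear branch $b\subset\gamma$ under consideration, which lies in such a component $\alpha$ — has the global relative symmetry property, and restricting the symmetry condition to $t\in b\cap\cc^2$ near the point $A\in\oci$ and taking for the $(T_Ab,A)$-local multigerm the $(T_Ab,A)$-localization of $\Gamma$, we obtain the local relative symmetry property of $b$ in the sense of the third definition of Subsection 3.1.

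With the local relative symmetry property established, $b$ is a nonlinear irreducible germ of analytic curve at $A\in\oci$, transverse to $\oci$ by hypothesis, satisfying the local relative symmetry property; Theorem \ref{tsym} applies directly and yields that $b$ is quadratic. $\Box$

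The step I expect to be the main obstacle — and the one requiring the most care — is the analytic continuation argument showing that the \emph{real} identity $f(A)=f(B)$ on tangent lines at points of $C$ implies the symmetry of $T_t\gamma\cap\Gamma$ for \emph{complex} $t$, together with the bookkeeping needed to reconcile the two levels of ``locality'': the multigerm $\Gamma$ in the definition of local relative symmetry is an $(T_Ab,A)$-local object, and one must verify that the global level curve $\{f=0\}$, localized at $A$ along $T_Ab$, genuinely contains $b$ and produces the required symmetric intersections near $A$. The point $A\in\oci$ being at infinity means one should also confirm that the symmetry condition, which is stated in the affine chart $\cc^2$, behaves correctly in the limit $t\to A$; this is where transversality of $b$ to $\oci$ is used, guaranteeing that $T_tb\cap\cc^2$ is a genuine affine line for $t$ near $A$ and that the intersection with $\Gamma\cap\cc^2$ is the expected finite symmetric set.
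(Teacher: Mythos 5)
Your proposal follows essentially the same route as the paper: normalize $f|_C\equiv 0$, continue the evenness of $f$ on affine tangent lines analytically from the real curve to the complex component of $\gamma$ containing $b$, pass to the $(T_Ab,A)$-localization of $\Gamma=\{f=0\}$, and invoke Theorem \ref{tsym}. The one step you assert but then defer as ``the main obstacle'' --- that for $t\in b\cap\cc^2$ close to $A$ the full intersection $T_tb\cap\Gamma\cap\cc^2$ coincides with the intersection of $T_tb$ with the $(T_Ab,A)$-localization of $\Gamma$, so that the global symmetry of $T_tb\cap\Gamma$ really yields the local relative symmetry property as defined via the multigerm --- is precisely the point the paper settles, by two short observations: every line sufficiently close to $T_Ab$ lies in any prescribed neighborhood of $T_Ab$, so all points of $T_tb\cap\Gamma$ are captured by local branches of $\Gamma$ at points of $T_Ab$; and the intersection point $T_tb\cap T_Ab$ tends to $A$ as $t\to A$, so the only intersection with branches along the line $T_Ab$ itself (excluded from the multigerm at base points other than $A$) occurs near $A$ and is covered by the germs at $A$. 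With that verification written out, your argument is complete and matches the paper's proof; your analytic-continuation step is in fact spelled out in more detail than in the paper, which simply cites the extension ``as in the proof of Theorem \ref{relsym}.''
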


\begin{proof} Let $C$ be the real curve under question, and let $f$ be a polynomial integral of the
corresponding outer billiard. Without loss of generality we consider that $C$ is an immersed image of a connected curve,
either an interval, or a circle (passing to a smaller curve) and
$f|_{\gamma}\equiv0$, see Proposition \ref{f=c}. Let $\gamma_0$ denote the
 irreducible component  of the curve $\gamma$ containing the germ $b$. The component $\gamma_0$
 is nonlinear and has the relative symmetry property with respect to the algebraic curve
 $\Gamma=\{ f=0\}\supset\gamma$: for every $P\in\gamma\cap\cc^2$   the restriction  of the integral $f$
to the affine tangent line $T_P\gamma$  is invariant under the central symmetry with respect to $P$,
as in the proof of Theorem \ref{relsym}.

For every point $u\in b$ close to $A$ the
intersection of the line $T_ub$ with the algebraic curve
$\Gamma$ coincides with the intersection of the line $T_ub$  and the
$(T_Ab,A)$-localization of the curve $\Gamma$.
This is implied by definition and the following facts:

- for any neighborhood $U$ of a given line $L$ all the lines sufficiently close to $L$ are
contained in $U$;

- the point of intersection $T_ub\cap T_Ab$ tends to $A$, as
$u\to A$.

The local branch $b$ of the curve $\gamma_0$
 has local relative symmetry property with respect to the
$(T_Ab,A)$-localization of the curve $\Gamma$, by the above discussion. This together with
Theorem \ref{tsym} implies that $b$ is quadratic and proves the corollary.
\end{proof}

\def\ga{\gamma}
\def\hga{\hat\gamma}

\subsection{Asymptotics of intersection points}

\begin{proposition} \label{pasym}  Let $a$, $b$ be transverse irreducible germs of holomorphic curves at the origin in $\cc^2$, let $b$ be nonlinear.
Let $(z,w)$ be affine
coordinates in a neighborhood of the origin in $\cc^2$ centered at $0$ such that $b$ is tangent to the $z$-axis at $0$. Let\; $t$ be
the local parameter of the curve $b$  as in (\ref{curve}): $z(t)=t^{q_b}$;  $w(t)=c_bt^{p_b}(1+o(1))$.
Then for every $t$ small enough the intersection $T_tb\cap a$  consists
of $q_a$ points $\xi_1,\dots,\xi_{q_a}$  whose coordinates have the following asymptotics, as $t\to0$:
\begin{equation} z(\xi_j)=O(t^{p_b})=o(t^{q_b})=o(z(t)), \ w(\xi_j)=(1-r_b)w(t)(1+o(1)).\label{zwb}\end{equation}
(Recall that $q_a=1$, if $a$ is a germ of line.)
\end{proposition}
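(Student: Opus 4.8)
The plan is to compute the intersection $T_tb\cap a$ explicitly from the two parametrizations and to extract the leading-order behavior. First I would write down the tangent line $T_tb$ at the point $b(t)=(z(t),w(t))=(t^{q_b},c_bt^{p_b}(1+o(1)))$. Since $\dot z(t)=q_bt^{q_b-1}(1+o(1))$ and $\dot w(t)=p_bc_bt^{p_b-1}(1+o(1))$, the line $T_tb$ admits the affine equation
\[
w - w(t) = \frac{\dot w(t)}{\dot z(t)}\,(z - z(t)) = \frac{p_b}{q_b}\,c_b t^{p_b-q_b}(1+o(1))\,(z - z(t)),
\]
so that a point $(z,w)$ lies on $T_tb$ iff $w = c_b t^{p_b}(1+o(1)) + r_b\,c_b t^{p_b-q_b}(1+o(1))\,(z-t^{q_b})$, i.e.
\[
w = r_b c_b t^{p_b-q_b}(1+o(1))\,z + (1-r_b)c_b t^{p_b}(1+o(1)).
\]
Next I would parametrize $a$. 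Because $a$ is transverse to $b$, hence transverse to the $z$-axis, $a$ is a germ of a curve not tangent to the $z$-axis; in the coordinate $w$ it has a Puiseux parametrization $s\mapsto(z_a(s),w_a(s))$ with $w_a(s)=s^{q_a}$ and $z_a(s)=\kappa s^{m}(1+o(1))$ for some $m>q_a$ (or $z_a(s)=\kappa s + \dots$ in the smooth case $q_a=1$; in the line case $a=\{w=\lambda z\}$, $\lambda\neq0$). In all cases $z_a$ and $w_a$ both vanish to positive order in $s$ and $w_a/z_a\to 0$ is false — rather $z_a/w_a\to 0$ unless $a$ is smooth, but what matters is simply that $z_a(s)=O(w_a(s))$ near $0$, so $z_a(s)\to 0$ as $s\to 0$.

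The core step is then to substitute the parametrization of $a$ into the equation of $T_tb$ and solve for $s$ as a function of $t$. Plugging in gives
\[
s^{q_a}(1+o(1)) = r_b c_b t^{p_b-q_b}(1+o(1))\,z_a(s) + (1-r_b)c_b t^{p_b}(1+o(1)).
\]
Since $z_a(s)=O(s^{q_a})$ (indeed $z_a(s)=o(s^{q_a})$ when $q_a>1$, and $z_a(s)=O(s)$ when $q_a=1$, while $t^{p_b-q_b}\to0$), the first term on the right is negligible compared to $s^{q_a}$ once $s,t$ are small, so the equation reduces to $s^{q_a}(1+o(1)) = (1-r_b)c_b t^{p_b}(1+o(1))$. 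This has exactly $q_a$ solutions $s=s_j(t)$, one for each $q_a$-th root of $(1-r_b)c_b$, with $s_j(t) = \big((1-r_b)c_b\big)^{1/q_a} t^{p_b/q_a}(1+o(1))$; note $1-r_b\neq0$ since $r_b>1$. (One should check the degenerate sub-case $q_a=1$, $a$ a line $w=\lambda z$ separately, but there the linear system is explicit and gives the same conclusion directly.) Feeding $s_j(t)$ back in yields $w(\xi_j)=w_a(s_j)=s_j^{q_a}=(1-r_b)c_bt^{p_b}(1+o(1))=(1-r_b)w(t)(1+o(1))$, and $z(\xi_j)=z_a(s_j)=O(s_j^{q_a})$ when $q_a>1$, respectively $O(s_j)$ when $q_a=1$; in either case $z(\xi_j)=O(t^{p_b})$, which is $o(t^{q_b})=o(z(t))$ because $p_b>q_b$. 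This establishes (\ref{zwb}).

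I expect the main obstacle to be purely bookkeeping rather than conceptual: making the chain of $(1+o(1))$ estimates rigorous and uniform in the choice of branch $j$, and in particular verifying carefully that the term $r_bc_bt^{p_b-q_b}z_a(s)$ really is lower order than $s^{q_a}$ in every case (smooth $a$, singular $a$, linear $a$), since this is what lets us drop it and get a clean implicit-function-type solution for $s(t)$. A clean way to organize this is to set $s = t^{p_b/q_b}\sigma$ and show the resulting equation for $\sigma$ has $q_a$ simple roots near the $q_a$-th roots of $(1-r_b)c_b$ by the implicit function theorem (or Rouché/Puiseux), which also automatically gives that there are exactly $q_a$ intersection points and no others escaping to the boundary of the coordinate neighborhood. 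Much of this may already be available from the results of \cite{alg} referenced in the text, in which case the proof reduces to quoting them and specializing.
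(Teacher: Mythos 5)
Your argument is correct, but it runs along a different track than the paper's. The paper proceeds geometrically: it first reduces (\ref{zwb}) to the $w$-asymptotics alone, noting that $z(\xi_j)=O(w(\xi_j))$ by transversality of $a$ to the $z$-axis; it then locates the intercepts of $T_tb$ with the two axes, quoting \cite[Proposition 2.10, p. 250]{alg} for $z(P_t)=\frac{r_b-1}{r_b}z(t)(1+o(1))$ and deducing $w(Q_t)=(1-r_b)w(t)(1+o(1))$ by a ``complex similar triangles'' argument, and finally shows that \emph{every} point of $T_tb\cap a$ has $w$-coordinate asymptotically equivalent to $w(Q_t)$, simply because $T_tb$ tends to the $z$-axis while $z=O(w)$ along $a$ — no parametrization of $a$ is ever used. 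You instead write the affine equation of $T_tb$ explicitly (its constant term is exactly the paper's $w(Q_t)$, so your differentiation replaces the citation of \cite{alg} plus the similarity argument), substitute a Puiseux parametrization of $a$ normalized by $w_a(s)=s^{q_a}$, and solve for $s(t)$ by rescaling and Rouch\'e. What your route buys is the statement the paper leaves implicit: that there are exactly $q_a$ intersection points in the small neighborhood; what the paper's route buys is brevity and the absence of any case analysis on $a$. Your method is in fact closer in spirit to the paper's proof of Proposition \ref{asym} (the Newton-diagram computation for tangent germs), specialized to the transverse case.

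Two slips, neither fatal. First, your description of the parametrization of $a$ (``$z_a(s)=\kappa s^{m}(1+o(1))$ with $m>q_a$'', and ``$z_a=o(s^{q_a})$ when $q_a>1$'') holds only when $a$ is tangent to the $w$-axis; a transverse germ may well be tangent to a line $w=\lambda z$, $\lambda\neq0$ (e.g.\ a cusp $(s^2,\lambda s^2+s^3)$), in which case $\operatorname{ord}_s z_a=q_a$ and $z_a$ is comparable to, not smaller than, $s^{q_a}$. Since the only estimate your reduction actually uses is $z_a(s)=O(s^{q_a})=O(w_a(s))$ — which is exactly what transversality to the $z$-axis gives — the argument survives, but the parenthetical claims should be dropped. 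Second, the rescaling should be $s=t^{p_b/q_a}\sigma$ (a fixed branch of the fractional power), not $s=t^{p_b/q_b}\sigma$; with the correct exponent the limiting equation is $\sigma^{q_a}=(1-r_b)c_b$, Rouch\'e yields exactly $q_a$ simple roots for small $t$, and an easy size comparison shows no roots occur at scales $|s|^{q_a}\gg|t|^{p_b}$ or $|s|^{q_a}\ll|t|^{p_b}$, so all intersection points are captured.
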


\begin{proof} It suffices to prove just the second asymptotic formula in (\ref{zwb}). Indeed, one has $z(\xi_j)=O(w(\xi_j))$,
by transversality. This together with the second  formula
in (\ref{zwb}) implies the first one: $z(\xi_j)=O(w(t))=O(t^{p_b})$.

For every $t$ small
enough the tangent line $T_tb$ intersects the $z$-axis at a point $P_t$ with the coordinate
\begin{equation}z(P_t)=\nu z(t)(1+o(1))=\nu t^{q_b}(1+o(1)), \ \nu=\frac{r_b-1}{r_b},\label{zpt}\end{equation}
by \cite[Proposition 2.10, p. 250]{alg}. Let $Q_t$ denote the intersection point of the line $T_tb$ with the $w$-axis. One has
\begin{equation}w(Q_t)=\frac{\nu}{\nu-1}w(t)(1+o(1))=(1-r_b)w(t)(1+o(1)).\label{trisim}\end{equation}
Indeed,
the triangle with  the vertices $P_t$, $t$, $(z(t),0)$ is ``complex-similar'' to the triangle
$P_tQ_tO$ ($O$ is the origin), since their sides opposite to the vertex $P_t$ lie in parallel affine complex lines. That is, in the new affine coordinates
centered at $P_t$
the second triangle is obtained from the
first one by multiplication by the complex number $\frac{z(P_t)}{z(P_t)-z(t)}=\frac{\nu}{\nu-1}(1+o(1))$, see (\ref{zpt}). This
implies (\ref{trisim}).
Let now $a$ be an arbitrary irreducible germ of holomorphic curve at the origin that is transverse to $b$.
Every family of points $\zeta(t)$ of the intersection  $T_tb\cap a$ has $w$-coordinate asymptotically equivalent to $w(Q_t)$, by transversality and
since the line $T_tb=Q_t\zeta(t)$ tends to the $z$-axis. In more detail, suppose the contrary:
the above $w$-coordinates are not asymptotically equivalent, hence the difference $w(\zeta(t))-w(Q_t)$ is asymptotically no less than $cw(\zeta(t))$, $c\neq0$. Then
the ratio $\frac{w(\zeta(t))-w(Q_t)}{z(\zeta(t))}$ would not tend to zero, since $z(\zeta(t))=O(w(\zeta(t)))$,
as was mentioned above. Hence, the line $Q_t\zeta(t)$ would not tend to the $z$-axis, -- a contradiction.
This together with (\ref{trisim}) proves the second equality in (\ref{zwb}).
The proposition is proved.
\end{proof}

\begin{corollary} \label{cortr}
Let $A\in\oci$, and let $(b,A)\subset\cp^2$ be a nonlinear irreducible germ of
analytic curve that is transverse to $\oci$.
Let $(a,A)$ be another irreducible germ transverse to $b$, and let $\xi_1,\dots,\xi_{q_a}$, $\xi_j=\xi_j(t)$ be the points of intersection
$T_tb\cap a$. Let $(x,y)$ be affine coordinates in $\cc^2$ such that the $x$-axis is tangent  to $b$ at $A$. Then
\begin{equation}x(t)=o(x(\xi_j(t)))\quad \text{for all } j,\quad \text{as } t\to A.\label{xio}\end{equation}
\end{corollary}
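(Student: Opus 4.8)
The plan is to obtain Corollary \ref{cortr} as a direct translation of Proposition \ref{pasym} to a local affine chart of $\cp^2$ centered at the point $A\in\oci$: under this translation the hypothesis that $b$ be transverse to $\oci$ becomes the hypothesis of Proposition \ref{pasym} that $b$ be a germ at the origin tangent to one of the coordinate axes, the transversality of $a$ and $b$ is preserved, and the conclusion $z(\xi_j)=o(z(t))$ of Proposition \ref{pasym} becomes, via a reciprocal coordinate, exactly the asymptotics (\ref{xio}).

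First I would fix homogeneous coordinates $[X_0:X_1:X_2]$ on $\cp^2$ with $\cc^2=\{X_0\neq0\}$, $(x,y)=(X_1/X_0,\,X_2/X_0)$ and $\oci=\{X_0=0\}$. Since $b$ is transverse to $\oci$, the tangent line $T_Ab$ is not $\oci$, so it meets $\cc^2$ in an affine line, and by hypothesis this line is the $x$-axis; hence $A=[0:1:0]$ is the point at infinity of the $x$-axis. I would then pass to the affine chart $\{X_1\neq0\}$ with coordinates $(z,w)=(X_0/X_1,\,X_2/X_1)$, so that $z=1/x$ and $w=y/x$ on the overlap. In this chart $A$ is the origin, $\oci=\{z=0\}$, and the image of the $x$-axis is the $z$-axis $\{w=0\}$. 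Therefore $b$ is a nonlinear irreducible germ at the origin whose tangent is the $z$-axis (because $T_Ab$ is the $x$-axis) and which is transverse to $\{z=0\}$; so its parametrization (\ref{curve}) takes the normal form $z(t)=t^{q_b}$, $w(t)=c_bt^{p_b}(1+o(1))$, and $a$ is an irreducible germ at the origin transverse to $b$. Thus all hypotheses of Proposition \ref{pasym} hold in the chart $(z,w)$.

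Applying Proposition \ref{pasym} then gives, for all sufficiently small $t$, that $T_tb\cap a$ consists of $q_a$ points $\xi_1(t),\dots,\xi_{q_a}(t)$ with $z(\xi_j(t))=o(z(t))$. A brief separate check (using that $a\neq\oci$, so that $a\cap\oci=\{A\}$, and that $T_tb$ meets the $z$-axis only in the point $P_t$ of (\ref{zpt}), which has $z(P_t)\neq0$ and hence is not $A$) shows that for small $t\neq0$ the points $\xi_j(t)$ lie in $\cc^2=\{z\neq0\}$, so that $x(\xi_j(t))=1/z(\xi_j(t))$ is defined. Since also $x(t)=1/z(t)$, we conclude
$$\frac{x(t)}{x(\xi_j(t))}=\frac{z(\xi_j(t))}{z(t)}\longrightarrow0\qquad\text{as }t\to0,$$
which is precisely (\ref{xio}) (in the statement ``$t\to A$'' means that the parameter $t$ of (\ref{curve}) tends to $0$).

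There is no genuine difficulty here: the proof consists of a coordinate change on $\cp^2$ plus an invocation of Proposition \ref{pasym}. The only points needing care are the bookkeeping that puts $b$, in the reciprocal chart at $A$, into the standard normal form with $z(t)=t^{q_b}$, and the minor verification that the intersection points $\xi_j(t)$ stay in the finite plane for small $t$, so that $x(\xi_j(t))$ is meaningful.
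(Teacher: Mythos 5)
Your proposal is correct and coincides with the paper's own argument: the paper's proof is precisely to pass to the local coordinates $(z,w)=(\frac1x,\frac yx)$ centered at $A$ and apply the first formula in (\ref{zwb}) of Proposition \ref{pasym}, which is what you do (with some extra, harmless bookkeeping about the normal form and the points $\xi_j(t)$ staying in $\cc^2$).
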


\begin{proof} Take the local coordinates $(z,w)=(\frac1x, \frac yx)$ centered at $A$ and apply the first formula in (\ref{zwb}).
\end{proof}

\begin{proposition}  \label{asym} (cf. \cite[p. 268, Proposition 2.50]{alg}\footnote{The formulas from loc. cit. provide the inverse expressions,
for the coordinate $z(t)$ in terms of $z(\xi_j(t))$. They are equivalent to the formulas given here.}) Let $a$, $b$ be irreducible germs of holomorphic
curves at the origin in the plane $\cc^2$ with coordinates $(z,w)$. Let $a$ and $b$ be  tangent to the $z$-axis, and let
$b$ be nonlinear.
 Let $c_a$ and $c_b$ be the corresponding coefficients in (\ref{curve}).
Then for every $t$ small enough the intersection \mbox{$T_tb\cap a$} consists
of $p_a$ points $\xi_1,\dots,\xi_{p_a}$ (or just one point $\xi_1$, if $a$ is the germ of the line $T_Ob$) whose $z$-coordinates have the following asymptotics, as $t\to0$.

Case 1): $r_a>r_b$ (including the linear case, when $r_a=\infty$).  One has
\begin{equation}z(\xi_j)=\frac{r_b-1}{r_b}z(t)(1+o(1))=\frac{r_b-1}{r_b}t^{q_b}(1+o(1)) \text{ for } 1\leq j\leq q_a,\label{a>b1}\end{equation}
\begin{equation}z(t)=O((z(\xi_j))^{\frac{r_a-1}{r_b-1}})=o(z(\xi_j)) \text{ for } j> q_a.\label{a>b2}\end{equation}
(Points satisfying (\ref{a>b2}) exist only when $a$ is nonlinear.)

Case 2): $r_a=r_b$. One has
\begin{equation} z(\xi_j)=\zeta_j^{q_a}z(t)(1+o(1))= \zeta_j^{q_a}t^{q_b}(1+o(1)),\label{a=b}\end{equation}
where $\zeta_j$ are the roots of the polynomial
\begin{equation}R_{p_a,q_a,c}(\zeta)=c\zeta^{p_a}-r\zeta^{q_a}+r-1; \ r=\frac{p_a}{q_a}>1, \ c=\frac{c_a}{c_b}.
\label{rpq}\end{equation}
(In the case, when $b=a$, one has $c=1$, and the above polynomial has the double root 1 corresponding to the
tangency  point $t$.)

Case 3): $r_a<r_b$. One has
\begin{equation} z(\xi_j)=O((z(t))^{\frac{r_b}{r_a}})=o(z(t)). \nonumber
\end{equation}
\end{proposition}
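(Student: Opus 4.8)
\medskip
\noindent\textbf{Plan of proof.}

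\emph{Reduction to one equation.} The idea is to realise $T_tb\cap a$ as the set of small solutions of a single holomorphic equation in two variables and to extract the three cases from its Newton diagram. Using the parametrisation (\ref{curve}) of $b$ with $z(t)=t^{q_b}$, $w(t)=c_bt^{p_b}(1+o(1))$ — where, here and below, every factor $(1+o(1))$ is an honest holomorphic germ equal to $1$ at the origin — one gets by differentiating the parametrisation that $T_tb$ has slope $c_br_bt^{p_b-q_b}(1+o(1))$, while by \cite[Proposition 2.10]{alg} (formula (\ref{zpt})) it meets the $z$-axis at $\tfrac{r_b-1}{r_b}t^{q_b}(1+o(1))$; hence $T_tb=\{w=m(t)z+e(t)\}$ with $m(t)=c_br_bt^{p_b-q_b}(1+o(1))$ and $e(t)=-c_b(r_b-1)t^{p_b}(1+o(1))$. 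If $a$ is a line it must be the $z$-axis $=T_Ob$ and the assertion is precisely (\ref{zpt}) (this is (\ref{a>b1}) with $q_a=1$, $r_a=\infty$); so assume $a$ nonlinear, parametrised by $s\mapsto(s^{q_a},c_as^{p_a}(1+o(1)))$. Then the points of $T_tb\cap a$ correspond bijectively to the solutions $s=s(t)$ (tending to $0$ as $t\to0$) of
$$c_as^{p_a}(1+o(1))=c_br_bt^{p_b-q_b}(1+o(1))\,s^{q_a}-c_b(r_b-1)t^{p_b}(1+o(1)),$$
with $z(\xi_j)=s_j^{q_a}$. Writing the difference of the two sides as $\Phi(s,t)$, a convergent power series in $(s,t)$, one has $\Phi(s,0)=c_as^{p_a}(1+o(1))$, of order $p_a$ in $s$; by the Weierstrass preparation theorem $\Phi$ is a unit times a distinguished polynomial in $s$ of degree $p_a$ whose lower coefficients vanish at $t=0$, so for each small $t$ there are exactly $p_a$ solutions with multiplicity, all $\to0$. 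This gives the asserted number of points.

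\emph{The Newton diagram and the trichotomy.} Apply the Newton--Puiseux algorithm to the displayed equation. Its three monomials $c_as^{p_a}$, $c_br_bt^{p_b-q_b}s^{q_a}$, $c_b(r_b-1)t^{p_b}$ occupy the lattice points $(p_a,0)$, $(q_a,p_b-q_b)$, $(0,p_b)$, the first coordinate being the exponent of $s$; the holomorphic factors $(1+o(1))$, being of strictly higher weighted order, affect neither this diagram nor the leading coefficients of its edges. Using $p_a=q_ar_a$ and $p_b=q_br_b$, the point $(q_a,p_b-q_b)$ satisfies $\tfrac{q_a}{p_a}+\tfrac{p_b-q_b}{p_b}=1+\tfrac1{r_a}-\tfrac1{r_b}$, so it lies strictly below, exactly on, or strictly above the segment $[(p_a,0),(0,p_b)]$ according as $r_a>r_b$, $r_a=r_b$, or $r_a<r_b$; this is exactly the trichotomy of Cases 1--3.

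\emph{The three cases.} In Case 1 the diagram has two edges. The edge $[(p_a,0),(q_a,p_b-q_b)]$, on which $c_as^{p_a}\sim c_br_bt^{p_b-q_b}s^{q_a}$, carries $p_a-q_a$ solutions, for which $z(\xi_j)=s^{q_a}$ is of order $t^{(p_b-q_b)/(r_a-1)}$; since $p_b-q_b=q_b(r_b-1)$ this is equivalent to $z(t)=O\bigl((z(\xi_j))^{(r_a-1)/(r_b-1)}\bigr)=o(z(\xi_j))$, i.e. (\ref{a>b2}). The edge $[(q_a,p_b-q_b),(0,p_b)]$, on which $c_br_bt^{p_b-q_b}s^{q_a}\sim c_b(r_b-1)t^{p_b}$, carries $q_a$ solutions with $z(\xi_j)=s^{q_a}=\tfrac{r_b-1}{r_b}t^{q_b}(1+o(1))$, i.e. (\ref{a>b1}). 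In Case 2 the diagram is the single segment $[(p_a,0),(0,p_b)]$ through $(q_a,p_b-q_b)$; factoring out $t^{p_b}$ and writing $z(\xi_j)=s^{q_a}=\zeta^{q_a}t^{q_b}$ turns the edge polynomial into $c_b\bigl(c\zeta^{p_a}-r\zeta^{q_a}+r-1\bigr)=c_bR_{p_a,q_a,c}(\zeta)$, with $r=p_a/q_a$ and $c=c_a/c_b$, so all $p_a$ solutions satisfy $z(\xi_j)=\zeta_j^{q_a}z(t)(1+o(1))$ with $\zeta_j$ the roots of $R_{p_a,q_a,c}$; this is (\ref{a=b})--(\ref{rpq}), and when $b=a$ a direct computation gives the double root $\zeta=1$ at the tangency point. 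In Case 3 the diagram is again $[(p_a,0),(0,p_b)]$, but the middle monomial is now inert; balancing $c_as^{p_a}\sim-c_b(r_b-1)t^{p_b}$ gives $p_a$ solutions with $z(\xi_j)=s^{q_a}$ of order $t^{p_b/r_a}=t^{q_br_b/r_a}$, that is $O\bigl((z(t))^{r_b/r_a}\bigr)=o(z(t))$.

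\emph{Main obstacle.} The one delicate point is the bookkeeping around the factors $(1+o(1))$: one must verify that they are genuine convergent (possibly fractional) power series of strictly higher order in the weighted grading used by Newton--Puiseux, so that they create neither spurious edges nor spurious roots and do not disturb the leading coefficients that yield the constant $\tfrac{r_b-1}{r_b}$ in (\ref{a>b1}) and the polynomial $R_{p_a,q_a,c}$ in (\ref{rpq}). This is precisely what the Newton--Puiseux/Weierstrass machinery is designed to control, and it is the same technical ingredient as in \cite[Proposition 2.50]{alg}, from which the present formulas can alternatively be recovered by inversion. Everything else is elementary arithmetic with the identities $p_b-q_b=q_b(r_b-1)$, $p_a-q_a=q_a(r_a-1)$, $q_ar_a=p_a$, $q_br_b=p_b$.
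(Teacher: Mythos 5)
Your proposal is correct and follows essentially the same route as the paper: it reduces the intersection $T_tb\cap a$ to a single holomorphic equation in the local parameter of $a$ and the parameter $t$ (your slope--intercept form of $T_tb$ is exactly the paper's equation (\ref{eqob})--(\ref{aseq}) up to the factor $c_b$), and then reads off the three cases and the root counts from the Newton diagram generated by the same three monomials. The only difference is one of packaging: the paper imports Cases 2), 3) and the asymptotic formulas from \cite[Proposition 2.50]{alg} and only writes out the edge-by-edge root count in Case 1), whereas you rederive all three cases self-containedly via the quasi-homogeneous substitution and Weierstrass preparation, which is consistent with the cited source.
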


\begin{proof} All the statements of the proposition were proved in loc. cit. except for the statement saying that in Case 1) one has
exactly $q_a$ intersection points with asymptotics (\ref{a>b1}) and exactly $p_a-q_a$ intersection points with asymptotics
(\ref{a>b2}) (no points satisfying (\ref{a>b2}), if $a$ is linear). Let us prove the latter statement. Thus, we consider that $r_a>r_b$. The case, when $a$ is linear, was treated in \cite[proposition 2.10, p.250]{alg},
see also formula (\ref{zpt}) above.
Thus, we consider that $a$ is nonlinear. Let $\tau$ denote the value of the local parameter
of the curve $a$ at a point of intersection $\xi(t)\in T_tb\cap a$. The obvious analytic equality
\begin{equation} G(t,\tau)=w(t)+\frac{w'(t)}{z'(t)}(z(\xi(t))-z(t))-w(\xi(t))=0\label{eqob}\end{equation}
has asymptotic form
\begin{equation} G(t,\tau)=t^{p_b}(1+o(1))+r_bt^{p_b-q_b}(\tau^{q_a}-t^{q_b})(1+o(1))-c\tau^{p_a}(1+o(1))=0, \label{aseq}\end{equation}
$ c=\frac{c_a}{c_b}\neq0$,
as in \cite[p. 269, proof of Proposition 2.50]{alg}. The Newton diagram of the germ of analytic function $G(t,\tau)$
is generated by the three monomials:
$(1-r_b)t^{p_b}$, $r_bt^{p_b-q_b}\tau^{q_a}$, $-c\tau^{p_a}$. It consists of two edges: the first one
with the vertices $(p_b,0)$ and $(p_b-q_b,q_a)$; the second one with the vertices $(p_b-q_b,q_a)$ and $(0,p_a)$. The latter edges
lie on distinct lines. The three above statements on Newton diagram
follow from the inequality $r_a>r_b$, as in loc. cit. The germ of analytic curve
$\{ G=0\}\subset\cc^2_{(t,\tau)}$ at the origin is a union of two germs $\eta_1\cup\eta_2$ corresponding to the edges of the Newton diagram, as in loc. cit.
Namely, the monomials $(1-r_b)t^{p_b}$ and $r_bt^{p_b-q_b}\tau^{q_a}$ generating the first edge are asymptotically opposite
(asymptotically ``cancel out'') along the germ $\eta_1$, and all the other Taylor monomials of the function $G$ are of higher order
along $\eta_1$, as in loc. cit. This implies that for every fixed small $t$ there are exactly $q_a$ parameter values $\tau$ for which
 $(t,\tau)\in\eta_1$, and they satisfy asymptotic equality (\ref{a>b1}).  Similarly, the monomials
 $r_bt^{p_b-q_b}\tau^{q_a}$ and $-c\tau^{p_a}$ are asymptotically opposite along the germ $\eta_2$, and for every fixed small $t$
 there are exactly $p_a-q_a$ values $\tau$ such that $(t,\tau)\in\eta_2$, and they satisfy asymptotic equality
 (\ref{a>b2}).
  Proposition \ref{asym} is proved.
\end{proof}

\begin{corollary} \label{casym}
Let $a,b\subset\cp^2$ be irreducible germs of holomorphic curves at a point $A\in\oci$ that are tangent to each other and
transverse to $\oci$, let $b$ be nonlinear.
Let $(x,y)$ be affine coordinates on $\cc^2$ with the $x$-axis being tangent to $b$ at $A$. Let
$\xi_1,\dots,\xi_{p_a}$, $\xi_j=\xi_j(t)$ be the points of intersection $T_tb\cap a$. Their $x$-coordinates
have the following asymptotics, as $t\to A$:

Case 1): $r_a>r_b$.  One has
\begin{equation}x(\xi_j)=\frac{r_b}{r_b-1}x(t)(1+o(1)) \text{ for } 1\leq j\leq q_a,\label{a>b1*}\end{equation}
\begin{equation}x(\xi_j)=o(x(t)) \text{ for } j>q_a.\label{a>b2*}\end{equation}
(Points satisfying (\ref{a>b2*}) exist only when $a$ is nonlinear.)

Case 2): $r_a=r_b$. One has
\begin{equation} x(\xi_j)=\theta_j^{q_a}x(t)(1+o(1)),\label{a=b*}\end{equation}
where $\theta_j$ are the roots of the polynomial\footnote{In the case, when $b=a$, one has $c=1$, and the  polynomial $Q_{p_a,q_a,1}$
has double root 1 corresponding to the tangency  point $t$. It has  roots $\theta$ with $\theta^{q_a}\neq 1$,
if and only if $r=\frac{p_{a}}{q_{a}}\neq2$.}
\begin{equation}Q_{p_a,q_a,c}(\theta)=\theta^{p_a}R_{p_a,q_a,c}(\theta^{-1})=(r-1)\theta^{p_a}
-r\theta^{p_a-q_a}+c; \label{rpq*}\end{equation}
$$r=\frac{p_a}{q_a}=\frac{p_b}{q_b}>1, \ c=\frac{c_a}{c_b}.$$

Case 3): $r_a<r_b$. One has
\begin{equation} x(t)=o(x(\xi_j)).\nonumber
\end{equation}

\end{corollary}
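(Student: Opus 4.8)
The plan is to derive Corollary \ref{casym} from Proposition \ref{asym} by a single projective change of coordinates, exactly as Corollary \ref{cortr} was derived from Proposition \ref{pasym}. The first step is to pass to the affine chart centered at $A$. Since the $x$-axis is tangent to $b$ at the point $A\in\oci$, we have $A=\overline{\{y=0\}}\cap\oci$, and in the local chart $(z,w)=\left(\frac1x,\frac yx\right)$ centered at $A$ the line $\oci$ becomes the $w$-axis while the projective closure of the $x$-axis becomes the $z$-axis. In this chart $A$ is the origin, $b$ is tangent to the $z$-axis (because $b$ is transverse to $\oci$ and tangent to the $x$-axis at $A$), and $a$, being tangent to $b$, is tangent to the $z$-axis as well. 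Hence $a$ and $b$, viewed as germs at the origin of the $(z,w)$-plane, satisfy the hypotheses of Proposition \ref{asym} with the very same exponents $q_a,p_a,q_b,p_b$ and coefficients $c_a,c_b$ from the parametrizations (\ref{curve}); moreover, for $t$ close to $A$ the projective line $T_tb$ is visible in this chart near the origin (it tends to the $z$-axis $=T_Ab$ as $t\to A$), and $T_tb\cap a$ is literally the same set of $p_a$ points $\xi_1,\dots,\xi_{p_a}$ (or the single point $\xi_1$ when $a=T_Ab$) in both descriptions.

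The second step is routine bookkeeping: transport the three cases of Proposition \ref{asym} through the substitution $x=1/z$, i.e. $x(\xi_j)=1/z(\xi_j)$ and $x(t)=1/z(t)$. Since all the quantities involved are nonzero and tend to $0$, one may take reciprocals of the asymptotic identities freely. In Case 1) ($r_a>r_b$), taking reciprocals in (\ref{a>b1}) and in the relation $z(t)=o(z(\xi_j))$ from (\ref{a>b2}) yields (\ref{a>b1*}) and (\ref{a>b2*}). In Case 3) ($r_a<r_b$), $z(\xi_j)=o(z(t))$ turns into $x(t)=o(x(\xi_j))$. In Case 2) ($r_a=r_b$), formula (\ref{a=b}) with $\zeta_j$ the roots of $R_{p_a,q_a,c}$ becomes $x(\xi_j)=\theta_j^{q_a}x(t)(1+o(1))$ with $\theta_j=\zeta_j^{-1}$; these are well defined because $R_{p_a,q_a,c}(0)=r-1\neq0$ forces every $\zeta_j\neq0$, and the polynomial $\theta\mapsto\theta^{p_a}R_{p_a,q_a,c}(\theta^{-1})$ is exactly $Q_{p_a,q_a,c}$ as defined in (\ref{rpq*}), so that its $p_a$ roots are precisely the $\theta_j$. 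This is (\ref{a=b*}). The footnote assertion about $Q_{p_a,q_a,1}$ is the image under $\zeta\mapsto\zeta^{-1}$ of the analogous statement for $R_{p_a,q_a,1}$: this involution fixes $1$, preserves multiplicities, and carries $\{\zeta^{q_a}=1\}$ to $\{\theta^{q_a}=1\}$.

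I do not expect a real obstacle. The two points that need a moment of attention are: (a) the legitimacy of the chart-change, namely that $T_tb$ and all its intersection points $\xi_j$ with $a$ genuinely lie near $A$ for $t$ near $A$ (clear, since $a$ is a germ at $A$ and $T_tb\to T_Ab$ as $t\to A$), so that the intersection computed projectively coincides with the one computed in the $(z,w)$-chart; and (b) the small algebraic check in Case 2) that replacing $\zeta$ by $\theta=\zeta^{-1}$ turns $R_{p_a,q_a,c}$ into the reciprocal polynomial $Q_{p_a,q_a,c}(\theta)=(r-1)\theta^{p_a}-r\theta^{p_a-q_a}+c$ of (\ref{rpq*}). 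Neither is more than a line's worth of verification.
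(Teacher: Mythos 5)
Your proposal is correct and is essentially the paper's own argument: the paper derives Corollary \ref{casym} from Proposition \ref{asym} precisely by rewriting its asymptotics in the local chart $(z,w)=(\frac1x,\frac yx)$, with your reciprocal-root observation $\theta_j=\zeta_j^{-1}$ being the implicit bookkeeping behind the definition $Q_{p_a,q_a,c}(\theta)=\theta^{p_a}R_{p_a,q_a,c}(\theta^{-1})$ in (\ref{rpq*}). You simply spell out the chart-change and the algebra that the paper leaves to the reader.
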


The corollary follows from Proposition \ref{asym} by writing its asymptotics in the local coordinates
$(z,w)=(\frac1x,\frac yx)$.

\subsection{Intersections with the germs having the same
projective Puiseux exponents. Proof of Theorem \ref{tsym}}

Let $A\in\oci$, and let $b\subset\cp^2$ be a nonlinear irreducible germ of analytic curve at
$A$ that is transverse to $\oci$. Let
$\Gamma\supset b$ be an arbitrary $(T_Ab,A)$-local multigerm containing $b$. Let $(x,y)$ be affine
coordinates on $\cc^2$ such that the $x$-axis is tangent to $b$ at $A$.

\begin{proposition} \label{asynt}
Those points of the intersection $T_tb\cap\Gamma$ whose $x$-coordinates are
asymptotically equivalent to $x(t)$ up to a
nonzero multiplicative constant, as $t\to A$, lie in the intersection of the line $T_tb$ with those irreducible germs $a\subset\Gamma$ that are centered at $A$,
tangent to $b$ and for which $r_a\geq r_b$, including the germ $b$. (Some of these germs $a$ may be linear.)
 The asymptotics of their $x$-coordinates are given by either (\ref{a>b1*}) if $r_a>r_b$,
 or (\ref{a=b*}) if $r_a=r_b$.
\end{proposition}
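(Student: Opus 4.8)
The plan is to decompose the finite set $T_tb\cap\Gamma$ (for $t$ close to $A$) as the union, over the finitely many irreducible components $a$ of the multigerm $\Gamma$, of the sets $T_tb\cap a$, and to control the asymptotics of the $x$-coordinates of each $T_tb\cap a$ separately. The components $a$ fall into three types: (I) those centered at $A$ and tangent to $b$; (II) those centered at $A$ and transverse to $b$; (III) those centered at a base point $A_j\in T_Ab$ with $A_j\neq A$. The elementary fact underlying everything is that, since $b$ is transverse to $\oci$ and the $x$-axis is $T_Ab$, in the coordinates $(z,w)=(\frac1x,\frac yx)$ centered at $A$ one has $z(t)=t^{q_b}\to0$, hence $|x(t)|=|z(t)|^{-1}\to\infty$ as $t\to A$; consequently any family of intersection points whose $x$-coordinates are $o(x(t))$ --- in particular any bounded family --- fails to be asymptotically equivalent to $x(t)$.

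For type (I): a component $a$ centered at $A$ and tangent to $b$ is automatically transverse to $\oci$, since it shares with $b$ the tangent line $T_Ab$, so Corollary \ref{casym} applies to the pair $(a,b)$, and we split according to $r_a$ versus $r_b$. If $r_a>r_b$ (Case 1), exactly $q_a$ points of $T_tb\cap a$ satisfy (\ref{a>b1*}), i.e.\ $x(\xi_j)=\frac{r_b}{r_b-1}x(t)(1+o(1))$, and the remaining $p_a-q_a$ points --- present only when $a$ is nonlinear --- satisfy $x(\xi_j)=o(x(t))$; the degenerate instance $a=T_Ab$, a linear germ at $A$ tangent to $b$ with $r_a=\infty$, fits here with $q_a=1$ and contributes one point with $x$-coordinate $\frac{r_b}{r_b-1}x(t)(1+o(1))$. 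If $r_a=r_b$ (Case 2), all $p_a$ points satisfy (\ref{a=b*}), i.e.\ $x(\xi_j)=\theta_j^{q_a}x(t)(1+o(1))$ with $\theta_j$ the roots of $Q_{p_a,q_a,c}$, $c=c_a/c_b$; since $Q_{p_a,q_a,c}(0)=c\neq0$ none of these roots vanishes, so each such point is asymptotically equivalent to $x(t)$ up to a nonzero constant (for $a=b$ the double root $\theta=1$ records the tangency point $t$ itself). If $r_a<r_b$ (Case 3), all points of $T_tb\cap a$ satisfy $x(t)=o(x(\xi_j))$. For type (II): a component $a$ centered at $A$ and transverse to $b$, linear or not, is covered by Corollary \ref{cortr}, whence $x(t)=o(x(\xi_j))$ for every point of $T_tb\cap a$. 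For type (III): a component $a$ centered at $A_j\neq A$ lies in a fixed small neighborhood of $A_j$, so the $x$-coordinates of the points of $T_tb\cap a$ stay bounded (they tend to $x(A_j)$ as $t\to A$) while $|x(t)|\to\infty$; hence these points, too, are $o(x(t))$.

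Collecting the three types, the only points of $T_tb\cap\Gamma$ whose $x$-coordinate is asymptotically equivalent to $x(t)$ up to a nonzero multiplicative constant are those produced in type (I) with $r_a\geq r_b$ --- that is, those lying on components of $\Gamma$ centered at $A$, tangent to $b$, with $r_a\geq r_b$, the germ $b$ being one of them --- and their $x$-coordinate asymptotics are precisely (\ref{a>b1*}) if $r_a>r_b$ and (\ref{a=b*}) if $r_a=r_b$. The argument is essentially bookkeeping on top of Corollaries \ref{cortr} and \ref{casym}; the points needing a moment of care are the observation $|x(t)|\to\infty$ --- which is what makes the contributions of type (III), of the remaining $p_a-q_a$ points in Case 1, and of the Case 3 points negligible --- and the remark that a linear germ based at a point $A_j\neq A$ of $T_Ab$ cannot be parallel to $T_Ab$ (two distinct lines through the common point $A_j$ are never parallel), so that type (III) really does always give bounded $x$-coordinates. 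The only genuine obstacle is ensuring that this classification of the components of $\Gamma$ is exhaustive, which it is.
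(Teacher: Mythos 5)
Your argument is correct and is essentially the paper's own proof: the paper likewise deduces the proposition directly from Corollaries \ref{cortr} and \ref{casym} together with the observation that intersections with germs of $\Gamma$ not centered at $A$ have bounded $x$-coordinates while $|x(t)|\to\infty$. Your case bookkeeping (tangent germs split by $r_a$ versus $r_b$, transverse germs, germs at other base points) just makes this explicit.
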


The proposition follows immediately from Corollaries \ref{cortr} and \ref{casym} and the fact that the $x$-coordinates of the points of the intersection of the line $T_tb$ with those germs $a$ in $\Gamma$ that are not centered at $A$ tend to
finite limits, as $t\to A$, while $x(t)\to\infty$.

In the sequel we
assume that the curve $b$ has relative symmetry property with respect to the multigerm
$\Gamma$. First we prove Theorem \ref{tsym} in the next simplest special case in order to underline the main ideas,
and then in the general case.

{\bf Special case:}
$\Gamma\setminus b$ is a union of germs centered at $A$
and transverse to $b$. Let us prove quadraticity of the germ $b$.
To do this, we consider those intersection points of the tangent line $T_tb$ with $\Gamma$, whose $x$-coordinates have
asymptotics $\nu x(t)(1+o(1))$, $\nu\neq0$, as $t\to A$. These are exactly the points of the intersection $T_tb\cap b$ (Proposition \ref{asynt}).
Their $x$-coordinates have asymptotics $\theta_j^qx(t)(1+o(1))$, where $\theta_1,\dots,\theta_p$ are the roots of the polynomial
$W(\theta)=Q_{p,q,1}(\theta)=(r-1)\theta^p-r\theta^{p-q}+1$; here $p=p_b$ and $q=q_b$ are  the degrees from
the parametrization (\ref{curve}) of the germ $b$ in the local chart $(z,w)=(\frac1x,\frac yx)$
(Corollary \ref{casym}). The intersection points of the line $T_tb$ with the other,  transverse germs of the
curve $\Gamma$ have $x$-coordinates with bigger asymptotics, by
Corollary \ref{cortr}. This together with the relative symmetry property implies that the collection of $x$-coordinates
of the points of intersection $T_tb\cap b$ is invariant
under the symmetry with respect to the point $x(t)$. This implies that the collection of powers $\theta_j^q$ is invariant under the symmetry
with respect to 1. Therefore,
\begin{equation}\sum_{j=1}^p\theta_j^q=p,\label{sumtp}\end{equation}
by symmetry. On the other hand,
\begin{equation}\sum_{j=1}^p\theta_j^q=\frac p{r-1}.\label{sumtq}\end{equation}
Indeed, the latter sum is independent of the free term of the polynomial
$W$, being expressed via elementary symmetric polynomials of degrees at most $q<p$, which are independent of the free term. Therefore,
it equals the sum of the $q$-th powers of nonzero roots of the polynomial $(r-1)\theta^p-r\theta^{p-q}$. All the latter $q$-th powers
are equal to $\frac r{r-1}$, hence their sum equals $\frac{qr}{r-1}=\frac p{r-1}$. This proves (\ref{sumtq}). Formulas
(\ref{sumtp}) and (\ref{sumtq}) together imply that $p=\frac p{r-1}$. Hence, $r=2$.

\medskip

{\bf General case.}
Let $a_1,\dots,a_l\subset\Gamma$ be those irreducible germs that are centered at $A$, tangent to $b$ and have the same projective Puiseux exponent:
$r_{a_i}=r_b=r$; $b$ is one of them. Here $l$ may be any natural number including 1.  Let $q_{a_i}$, $p_{a_i}$, $c_{a_i}$ be the corresponding degrees and coefficients from their parameterizations
(\ref{curve}) in the local chart $(z,w)=(\frac1x,\frac yx)$. Let
$$c_i=\frac{c_{a_i}}{c_b}, \ W_i=Q_{p_{a_i},q_{a_i},c_i}(\theta)$$
be the corresponding constants and
polynomials from (\ref{rpq*}). Let $\theta_{ij}$, $i=1,\dots,l$, $j=1,\dots,p_{a_i}$ denote the roots of the
polynomials $W_i$.

Let $\Gamma_b\subset\Gamma$ denote the union of those germs in $\Gamma$ that are centered at $A$, tangent
to $b$ and have Puiseux exponents greater than $r=r_b$ (some of them may be linear).

Let $k_1$ denote the number of those points $\xi(t)$ of the intersection $T_tb\cap\Gamma$,
for which $x(\xi(t))=o(x(t))$, as $t\to A$. These are exactly those points that either lie in $\Gamma_b$ and
have $x$-coordinates $o(x(t))$,  see (\ref{a>b2*}), or lie in those germs
in $\Gamma$ that are not centered at $A$. Let $k_2$ denote the number of those points of the
intersection  $T_tb\cap\Gamma_b$ whose $x$-coordinates are asymptotic to $\frac r{r-1}x(t)$, see (\ref{a>b1*}).

\begin{proposition} \label{prnw} Let $r=r_b\neq2$. The collection of  powers $\theta_{ij}^{q_{a_i}}$ of the above roots contains
exactly $k_1$ powers equal to 2 and exactly $k_2$ powers equal to $\frac{r-2}{r-1}$. The collection $M$ of the other powers
$\theta_{ij}^{q_{a_i}}\neq 2,\frac{r-2}{r-1}$ (each of the latter powers being taken with the total multiplicity of the
corresponding roots) is invariant under the symmetry of the line $\cc$ with respect to $1$.
\end{proposition}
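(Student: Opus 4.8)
\medskip
\noindent\emph{Plan of proof.} The plan is to work in the normalized coordinate $u=x/x(t)$ along the tangent line $T_tb$. This is legitimate because $T_tb$ tends to $T_Ab=\{y=0\}$ as $t\to A$, so $x$ restricts to an affine coordinate on $T_tb$ for $t$ close to $A$; in this coordinate the central symmetry of $T_tb$ with respect to the point $t$ is the involution $\sigma\colon u\mapsto 2-u$, the reflection fixing $u=1$. The relative symmetry property of $b$ with respect to $\Gamma$ says exactly that for every $t$ close to $A$ this $\sigma$ permutes the points of $T_tb\cap\Gamma\cap\cc^2$ (counted with multiplicities). Hence, denoting by $\overline S$ the multiset of leading asymptotic values of $u$ along these points as $t\to A$ — each such value being a finite nonzero number, or the symbol $0$ if $u\to0$, or the symbol $\infty$ if $|u|\to\infty$ — and extending $\sigma$ by $\sigma(0)=2$, $\sigma(\infty)=\infty$, the multiset $\overline S$ is $\sigma$-invariant. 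Everything else is bookkeeping of $\overline S$, with all the relevant asymptotics already supplied by Corollaries \ref{cortr} and \ref{casym} and Proposition \ref{asynt}.

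\medskip
First I would sort the points of $T_tb\cap\Gamma$ into four asymptotic types. Those with $u\to0$ are, by hypothesis, the $k_1$ points lying either on germs of $\Gamma$ centred at $A$ with Puiseux exponent greater than $r$ and with $x$-coordinate $o(x(t))$ (cf. (\ref{a>b2*})) or on germs of $\Gamma$ not centred at $A$ (Proposition \ref{asynt}); those with $u\to\frac r{r-1}$ are the $k_2$ points on $\Gamma_b$ of (\ref{a>b1*}); those with $|u|\to\infty$ lie on germs of $\Gamma$ centred at $A$ and transverse to $b$ (Corollary \ref{cortr}) or on germs centred at $A$, tangent to $b$, with Puiseux exponent smaller than $r$ (Corollary \ref{casym}, Case 3); and the remaining points lie on $a_1,\dots,a_l$, their $u$-limits forming, by Corollary \ref{casym}, Case 2, exactly the multiset $\Theta$ of the powers $\theta_{ij}^{q_{a_i}}$, each counted with the multiplicity of the root $\theta_{ij}$ of $W_i$. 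Two small observations, immediate from (\ref{rpq*}), are needed: since $c_i\ne0$, the polynomial $W_i$ has no zero root, so $\Theta$ contains no $0$; and if $\theta^{q_{a_i}}=\frac r{r-1}$ then
$$W_i(\theta)=\theta^{p_{a_i}-q_{a_i}}\bigl((r-1)\theta^{q_{a_i}}-r\bigr)+c_i=c_i\ne0,$$
so $\Theta$ contains no $\frac r{r-1}$ either. Thus $\overline S$ is the disjoint union of $\{0^{k_1}\}$, $\{(\tfrac r{r-1})^{k_2}\}$, $\Theta$, and a multiset all of whose entries equal $\infty$.

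\medskip
Finally I would invoke $r\ne2$, which is precisely what makes the four numbers $0$, $2=\sigma(0)$, $\frac r{r-1}$ and $\frac{r-2}{r-1}=\sigma(\frac r{r-1})$ pairwise distinct. Since $\sigma$ fixes $\infty$, applying $\sigma$ to $\overline S=\sigma(\overline S)$ and cancelling the common copies of $\infty$ yields
$$\{0^{k_1}\}\sqcup\{(\tfrac r{r-1})^{k_2}\}\sqcup\Theta=\{2^{k_1}\}\sqcup\{(\tfrac{r-2}{r-1})^{k_2}\}\sqcup\sigma(\Theta).$$
Writing $n_v(\cdot)$ for the multiplicity of $v$ in a multiset: comparing multiplicities at $0$ gives $k_1=n_2(\Theta)$ (the left side has $k_1$ zeros from the first term and none from $\Theta$; the right side has $n_0(\sigma(\Theta))=n_2(\Theta)$), i.e. $\Theta$ has exactly $k_1$ powers equal to $2$; comparing at $\frac r{r-1}$, where $\Theta$ contributes nothing, gives $k_2=n_{\frac{r-2}{r-1}}(\Theta)$, i.e. $\Theta$ has exactly $k_2$ powers equal to $\frac{r-2}{r-1}$; and comparing at any $v\notin\{0,2,\frac r{r-1},\frac{r-2}{r-1}\}$ gives $n_v(\Theta)=n_{2-v}(\Theta)$. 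As $M$ is obtained from $\Theta$ by deleting exactly these $k_1$ copies of $2$ and $k_2$ copies of $\frac{r-2}{r-1}$, it is supported on $\cc\setminus\{0,2,\frac r{r-1},\frac{r-2}{r-1}\}$, a set that $\sigma$ maps onto itself, and there $n_v(M)=n_v(\Theta)=n_{2-v}(\Theta)=n_{2-v}(M)$; hence $M$ is $\sigma$-invariant, which is the asserted symmetry with respect to $1$. The main obstacle I expect is the careful sorting in the second paragraph — verifying that the given $k_1$ and $k_2$ really exhaust the points with $u\to0$ and $u\to\frac r{r-1}$ (where the one-line identity $W_i(\theta)=c_i$ and the assumption $r\ne2$ are used), and checking that $\sigma$ may legitimately be applied with multiplicities, which in the case actually used (Corollary \ref{cortsym}) is clear because $\{f=0\}$ restricts to a genuinely even polynomial on each line $T_tb$. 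Once the four populations are identified, the rest is pure multiset arithmetic.
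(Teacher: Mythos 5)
Your argument is correct and is essentially the paper's own proof: the same sorting of the intersection points of $T_tb$ with $\Gamma$ by their asymptotic factors via Proposition \ref{asynt} and Corollaries \ref{cortr}, \ref{casym}, the same identity $W_i(s)=c_i\neq0$ excluding the factor $\frac{r}{r-1}$ from the germs $a_i$, and the same use of $r\neq2$ to keep $0,2,\frac{r}{r-1},\frac{r-2}{r-1}$ distinct before matching symmetric points. Recasting the bookkeeping as a $\sigma$-invariant multiset of limits of $u=x/x(t)$ is only a (cleaner) reformulation of the paper's point-by-point symmetry argument.
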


\begin{proof} The  points of intersection $T_tb\cap\Gamma$ whose $x$-coordinates are  $o(x(t))$ should be symmetric
with respect to $t$ to other intersection points with  $x$-coordinates asymptotically
equivalent to $2x(t)$ and vice versa. The latter should be
points of intersection with the germs $a_i$. This follows from Proposition \ref{asynt} and the fact that they cannot be
points of the intersection $T_tb\cap\Gamma_b$. The latter statement follows from  (\ref{a>b1*}) and the inequality
$\frac r{r-1}\neq2$, which follows from the assumption that
$r\neq2$. This together with Corollary \ref{casym} and Proposition \ref{asynt} implies that  exactly
$k_1$ powers $\theta_{ij}^{q_{a_i}}$ are equal to 2. Similarly, if the collection $\Gamma_b$ is non-empty,
then the intersection $T_tb\cap\Gamma_b$ contains points whose $x$-coordinates are  asymptotic to $\frac r{r-1}x(t)$, by (\ref{a>b1*}). Vice versa, the points of the intersection $T_tb\cap\Gamma$
with the latter asymptotics lie in  $T_tb\cap\Gamma_b$.
This follows from Corollary \ref{casym}, Proposition \ref{asynt} and the fact that they cannot be points of intersection
with the germs $a_i$: no number $s=(\frac r{r-1})^{\frac1{q_{a_i}}}$ can be
a root of a polynomial $W_i$ with $c_i\neq0$. Indeed,
$$W_i(s)=(r-1)s^{p_{a_i}}-rs^{p_{a_i}}\left(\frac r{r-1}\right)^{-1}+c_i=c_i\neq0.$$
The above points of the intersection $T_tb\cap\Gamma_b$ should be symmetric to the points of the
intersection $T_tb\cap\Gamma$ with $x$-coordinates asymptotically
equivalent to $(2-\frac r{r-1})x(t)=\frac{r-2}{r-1}x(t)$. The latter points of intersection
lie in the  union of the germs $a_i$, by Proposition \ref{asynt} and
due to $\frac{r-2}{r-1}\neq \frac r{r-1}$.  Therefore,
exactly $k_2$ powers $\theta_{ij}^{q_{a_i}}$ are equal to $\frac{r-2}{r-1}$. The points of the intersection
$T_tb\cap\Gamma$ having $x$-coordinates asymptotically equivalent to $vx(t)$ with $v\neq0,2,\frac r{r-1},\frac{r-2}{r-1}$
are symmetric with respect to $t$, by relative symmetry property.  The  collection of the corresponding
asymptotic factors $v$, thus  symmetric with respect to 1, coincides with the collection $M$ of the powers
$\theta_{ij}^{q_{a_i}}\neq 2,\frac{r-2}{r-1}$, by the above arguments.
The proposition is proved.
\end{proof}

\begin{proposition} \label{pl2} Let $r>1$. Consider a collection
 \begin{equation}S_r=\{(p_i,q_i,c_i)\}_{i=1,\dots,N}, \ q_i,p_i\in\nn, \ p_i>q_i, \ \frac{p_i}{q_i}=r, \ c_i\in\cc\setminus\{0\},
 \label{sr}\end{equation}
 set $W_i(\theta)=(r-1)\theta^{p_i}-r\theta^{p_i-q_i}+c_i$. Let $\theta_{ij}$ ($j=1,\dots,p_i$)  denote the roots of the polynomials $W_i$.
Let $M$ denote the collection of those $q_i$-th powers of roots $\theta_{ij}$ that are different from 2 and $\frac{r-2}{r-1}$:
each power being taken with the total multiplicity of the corresponding roots. Let $M$ be invariant under the
symmetry of the line $\cc$ with respect to $1$.  Let  $k_1$ ($k_2$) denote the number of those
 pairs $(i,j)$, for which  $\theta_{ij}^{q_i}$ equals 2 (respectively, $\frac{r-2}{r-1}$). Set
 $$\Pi=\sum_ip_{i}=\text{ the cardinality of the collection of all the roots } \theta_{ij}.$$
 Then
\begin{equation}(r-2)\Pi=k_2-k_1(r-1).\label{rpk1}\end{equation}
\end{proposition}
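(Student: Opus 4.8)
The plan is to compute the total sum $T=\sum_{i}\sum_{j=1}^{p_i}\theta_{ij}^{q_i}$ over all roots, with multiplicities, in two different ways, and then compare the outcomes.

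First I would use the symmetry hypothesis. The full collection of powers $\{\theta_{ij}^{q_i}\}$ (with multiplicities) is the disjoint union of $M$, of $k_1$ copies of $2$, and of $k_2$ copies of $\frac{r-2}{r-1}$; in particular $|M|=\Pi-k_1-k_2$ (here one also notes that for $r>1$ the three numbers $1$, $2$, $\frac{r-2}{r-1}$ are pairwise distinct, so this bookkeeping is consistent). Since $M$ is invariant under the involution $m\mapsto 2-m$, summing this involution over $M$ gives $\sum_{m\in M}m=|M|$. Hence
$$T=|M|+2k_1+\tfrac{r-2}{r-1}k_2=(\Pi-k_1-k_2)+2k_1+\tfrac{r-2}{r-1}k_2=\Pi+k_1-\tfrac1{r-1}k_2.$$

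Second I would compute each inner sum $\sum_{j=1}^{p_i}\theta_{ij}^{q_i}$ directly from $W_i$, exactly as in the Special case of the proof of Theorem \ref{tsym}. The coefficients of $W_i$ in degrees $p_i-1,\dots,p_i-q_i+1$ all vanish, so the elementary symmetric functions of degrees $1,\dots,q_i$ of the roots of $W_i$ coincide with those of $(r-1)\theta^{p_i}-r\theta^{p_i-q_i}=\theta^{p_i-q_i}\big((r-1)\theta^{q_i}-r\big)$; by Newton's identities the same is then true of the power sums of degrees $1,\dots,q_i$, and in particular $\sum_j\theta_{ij}^{q_i}$ equals the sum of the $q_i$-th powers of the $q_i$ nonzero roots of the latter polynomial, each of which satisfies $\theta^{q_i}=\frac r{r-1}$. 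Thus $\sum_j\theta_{ij}^{q_i}=q_i\cdot\frac r{r-1}$, and summing over $i$, using $q_i=p_i/r$,
$$T=\frac r{r-1}\sum_i q_i=\frac r{r-1}\cdot\frac\Pi r=\frac\Pi{r-1}.$$

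Equating the two expressions for $T$ and multiplying by $r-1$ gives $(r-1)\Pi+(r-1)k_1-k_2=\Pi$, i.e. $(r-2)\Pi=k_2-(r-1)k_1$, which is (\ref{rpk1}). The only step requiring real care is the second one: the fact that the power sums of degrees up to $q_i$ of the roots of $W_i$ do not depend on the free term $c_i$ (nor on the intermediate coefficients). This is, however, precisely the elementary-symmetric-function argument already carried out for one polynomial in the Special case of the proof of Theorem \ref{tsym}, applied uniformly to each $W_i$, so no new obstacle arises.
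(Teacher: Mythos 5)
Your proof is correct and follows essentially the same route as the paper: both compute the total power sum $\sum_{i,j}\theta_{ij}^{q_i}=\frac{\Pi}{r-1}$ via the independence of the low-degree symmetric functions from the free term $c_i$ (exactly the argument behind (\ref{sumtq})), and both combine this with the fact that the symmetry of $M$ about $1$ forces $\sum_{m\in M}m=\Pi-k_1-k_2$, yielding (\ref{rpk1}). The only difference is cosmetic bookkeeping (you equate two expressions for the total sum, the paper equates two expressions for $\sum_{x\in M}x$).
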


\begin{proof}
The invariance of the collection $M$ under the symmetry with respect to $1$ implies that the sum of its elements equals the cardinality
$card(M)=\Pi-k_1-k_2$. On the other hand,
\begin{equation} \Pi-k_1-k_2=\sum_{x\in M}x=\sum_{i,j}\theta_{ij}^{q_{i}}-2k_1-\frac{r-2}{r-1}k_2,\label{k12}\end{equation}
by definition. Let us calculate the latter right-hand side. One has
\begin{equation}\sum_{ij}\theta_{ij}^{q_{i}}=\sum_i\frac{p_{i}}{r-1}=\frac{\Pi}{r-1},\label{pt}\end{equation}
as in (\ref{sumtq}). Substituting (\ref{pt}) to (\ref{k12}) yields
$$\frac{\Pi}{r-1}-2k_1-\frac{r-2}{r-1}k_2=\Pi-k_1-k_2,$$
which is equivalent to (\ref{rpk1}).
\end{proof}

  \begin{proposition} \label{propk12}  Let $r>1$. Consider a collection $S_r$ as in (\ref{sr}), set
$W_i(\theta)=(r-1)\theta^{p_i}-r\theta^{p_i-q_i}+c_i$. Let $\theta_{ij}$ ($j=1,\dots,p_i$)  denote the roots of the polynomials $W_i$,
set $\Pi=\sum_{i=1}^Np_i$. Let $k_1$ ($k_2$) denote the number of index pairs $(i,j)$ for which $\theta_{ij}^{q_{i}}$  equals 2
(respectively, $\frac{r-2}{r-1}$). Let equality (\ref{rpk1}) hold.  Then $r=2$.
\end{proposition}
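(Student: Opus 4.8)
The plan is to proceed by contradiction, assuming $r\neq 2$. Since $r=p_i/q_i>1$ is rational, write $r=a/b$ in lowest terms with integers $a>b\geq 1$ (so $a\geq 2$); the constraint $p_i/q_i=r$ then forces $p_i=am_i$, $q_i=bm_i$ with $m_i:=\gcd(p_i,q_i)\geq 1$, whence $\Pi=\sum_i p_i=aM_0$ for $M_0:=\sum_i m_i\geq 1$. The key a priori estimate will be $k_1\leq M_0$ and $k_2\leq M_0$. To prove it, fix $i$ and plug a root $\theta$ of $W_i$ with $\theta^{q_i}=2$ (resp. $\theta^{q_i}=\frac{r-2}{r-1}$, which is nonzero since $r\neq 1,2$) into $W_i(\theta)=(r-1)\theta^{p_i}-r\theta^{p_i-q_i}+c_i$, using $\theta^{p_i}=\theta^{q_i}\cdot\theta^{p_i-q_i}$; the relation $W_i(\theta)=0$ then collapses to $\theta^{p_i-q_i}=\tfrac{-c_i}{r-2}$ (resp. $\theta^{p_i-q_i}=\tfrac{c_i}{2}$), a nonzero constant because $c_i\neq 0$. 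So every such root solves a system $\theta^{q_i}=\alpha,\ \theta^{p_i-q_i}=\beta$ with $\alpha,\beta\neq 0$, which has at most $\gcd(q_i,p_i-q_i)=\gcd(q_i,p_i)=m_i$ solutions (the quotient of two solutions is a common root of $X^{q_i}=1$ and $X^{p_i-q_i}=1$). A one-line computation of $W_i'$ shows moreover that each such $\theta$ is a simple root (the relevant bracket in $W_i'(\theta)=\theta^{p_i-q_i-1}\bigl[(r-1)p_i\theta^{q_i}-r(p_i-q_i)\bigr]$ evaluates to $rq_i(r-1)$, resp. $-rq_i$, both nonzero). Hence these roots contribute at most $m_i$ to $k_1$, resp. $k_2$, and summing over $i$ gives $k_1,k_2\leq M_0$.

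With these bounds, substitute $\Pi=aM_0$ into the hypothesis $(r-2)\Pi=k_2-k_1(r-1)$ and multiply through by $b>0$, using $b(r-2)=a-2b$ and $b(r-1)=a-b$, to obtain
$$(a-2b)\,aM_0=b\,k_2-(a-b)\,k_1,\qquad a-b\geq 1.$$
If $r>2$, then $a-2b\geq 1$, so the left-hand side is $\geq aM_0$ while the right-hand side is $\leq b\,k_2\leq bM_0$, forcing $a\leq b$, which contradicts $a>2b\geq b+1$. If $r<2$, then $2b-a\geq 1$, so the left-hand side is $\leq -aM_0$ while the right-hand side is $\geq -(a-b)k_1\geq -(a-b)M_0$, forcing $b\leq 0$, again a contradiction. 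Finally, $a=2b$ together with $\gcd(a,b)=1$ yields $b=1$, $a=2$, i.e. $r=2$. In all cases $r=2$.

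The step I expect to be the main obstacle is the estimate $k_1,k_2\leq M_0$. Two points are essential: one must bound the number of roots of each prescribed type by $\gcd(p_i,q_i)$ rather than by a cruder quantity (weaker bounds like ``at most $q_i$'' or ``at most $p_i$'' do not suffice — the two-case inequality above genuinely uses $\Pi=aM_0$ with $a\geq 2$ the \emph{numerator} of $r$), and one must know that these roots are simple so that passing to the list of roots counted with multiplicity does not inflate $k_1$ and $k_2$. Both facts emerge from the single algebraic collapse produced by $\theta^{p_i}=\theta^{q_i}\theta^{p_i-q_i}$, so once that identity is used the rest reduces to routine bookkeeping with the displayed inequalities.
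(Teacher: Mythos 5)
Your proof is correct and takes essentially the same route as the paper: the core in both is that each $W_i$ has at most $\gcd(p_i,q_i)$ roots, all simple, whose $q_i$-th power equals the prescribed value ($2$ or $\frac{r-2}{r-1}$), established exactly as you do via the factorization of $W_i'$ and the fact that ratios of such roots are roots of unity of order dividing $\gcd(p_i,q_i)$. The only difference is bookkeeping: you sum these bounds into $k_1,k_2\le\sum_i\gcd(p_i,q_i)$ and compare with $(r-2)\Pi=k_2-k_1(r-1)$ after clearing denominators, whereas the paper deduces $k_2>\Pi/p$ (resp.\ $k_1>\Pi/p$) from the same identity and pigeonholes into a single $W_i$; the substance is identical.
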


\begin{proof}
Let us represent $r$ as an irreducible fraction $r=\frac pq$, $p,q\in\nn$. One has
\begin{equation}p_{i}=s_ip, \ q_{i}=s_iq, \ s_i=\gcd(p_{i},q_{i}). \label{pqi}\end{equation}
Suppose the contrary: equality (\ref{rpk1}) holds and $r\neq2$.

Case 1): $r>2$. Hence, $r-2\geq\frac1q$. One has
$$k_2\geq(r-2)\Pi\geq\frac1q\Pi>\frac1p\Pi,$$
by (\ref{rpk1}). This implies that there exists a polynomial $W_i$ for which more than $\frac1p$-th part of its roots have
$q_{i}$-th powers equal to $\frac{r-2}{r-1}$. Thus, the number of the latter roots  is no less than $s_i+1=\frac{p_{i}}p+1$.
We will show that the above $W_i$
cannot exist. Let it exist, and let us fix it. None of its roots $\theta_{ij}$ with $\theta_{ij}^{q_{i}}=\frac{r-2}{r-1}$ can be a multiple root.
Indeed, the derivative of the polynomial $W_i(\theta)$ equals
$$\theta^{p_{i}-q_{i}-1}(p_{i}(r-1)\theta^{q_{i}}-r(p_{i}-q_{i}))=p_{i}(r-1)
\theta^{p_{i}-q_{i}-1}(\theta^{q_{i}}-1),$$
since $r(p_{i}-q_{i})=rq_{i}(r-1)=p_{i}(r-1)$. Therefore,
the $q_{i}$-th powers of the roots of the derivative are equal to $0,1\neq\frac{r-2}{r-1}$. Hence, $W_i$ has  at least $s_i+1$
distinct roots $\theta=\theta_{ij}$ with $\theta_{ij}^{q_{i}}=\frac{r-2}{r-1}$. By definition, the latter roots satisfy the equality
$$\theta^{p_{i}-q_{i}}((r-1)\theta^{q_{i}}-r)+c_i=\theta^{p_{i}-q_{i}}((r-2-r)+c_i$$
\begin{equation}=-2\theta^{p_{i}-q_{i}}+c_i=0\label{theta0}\end{equation}
and have equal $q_{i}$-th powers.  Hence, their $p_{i}$-th powers are also equal, by (\ref{theta0}).
Therefore, the ratio of any two above roots is simultaneously a
$q_{i}$-th and $p_{i}$-th root of unity, and hence, an $s_i$-th root of unity, since $p_{i}=s_ip$, $q_{i}=s_iq$ and $p$, $q$
are coprime. Therefore, the number of roots under question is no greater than $s_i$.
We get a contradiction.

Case 2): $1<r<2$. One has
$$k_1\geq\Pi\frac{2-r}{r-1}=\Pi\frac{2q-p}{p-q}\geq\frac1{p-q}\Pi>\frac1p\Pi,$$
by (\ref{rpk1}). This implies that the there exists a polynomial $W_i$ that has at least $s_i+1$ roots whose
$q_{i}$-th powers are equal to 2, as in the previous case.
We then get a contradiction, as in the above discussion. The proposition is proved.
\end{proof}

\begin{lemma} \label{lemn} Let $r>1$, $S_r=\{(p_i,q_i,c_i)\}_{i=1,\dots,N}$ be a collection as in (\ref{sr}).
Set $W_i(\theta)=(r-1)\theta^{p_i}-r\theta^{p_i-q_i}+c_i$. Let $\theta_{ij}$ ($j=1,\dots,p_i$)  denote the roots of the polynomials $W_i$.
Let $M$ denote the collection of those $q_i$-th powers of roots $\theta_{ij}$ that are different from 2 and $\frac{r-2}{r-1}$:
each power being taken with the total multiplicity of the corresponding roots. Let $M$ be invariant under the
symmetry of the line $\cc$ with respect to $1$.
Then $r=2$.
\end{lemma}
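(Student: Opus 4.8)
The plan is to obtain Lemma \ref{lemn} as the direct composition of Propositions \ref{pl2} and \ref{propk12}, which between them already contain all the real work. First I would fix the bookkeeping: given the collection $S_r$ of (\ref{sr}) and the polynomials $W_i(\theta)=(r-1)\theta^{p_i}-r\theta^{p_i-q_i}+c_i$, each $W_i$ has exactly $p_i$ roots $\theta_{ij}$ counted with multiplicity, so $\Pi=\sum_i p_i$ is the total number of roots; let $k_1$ and $k_2$ be the numbers of index pairs $(i,j)$ with $\theta_{ij}^{q_i}$ equal to $2$ and to $\frac{r-2}{r-1}$ respectively. Then $M$ is by definition the multiset of the remaining powers $\theta_{ij}^{q_i}$, so it has $\Pi-k_1-k_2$ elements counted with multiplicity. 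These are precisely the data appearing in both propositions, so the hypotheses match verbatim.

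Next I would invoke Proposition \ref{pl2}. Its assumptions are exactly those of Lemma \ref{lemn}: $r>1$, the collection $S_r$ with its polynomials $W_i$, and the invariance of $M$ under the central symmetry $x\mapsto 2-x$ of $\cc$ about $1$. Its conclusion is the identity (\ref{rpk1}), namely $(r-2)\Pi=k_2-k_1(r-1)$. Internally this comes from two observations that I may take as given: the symmetric multiset $M$ has the sum of its elements equal to its cardinality $\Pi-k_1-k_2$; and the aggregate power sum $\sum_{i,j}\theta_{ij}^{q_i}=\sum_i\frac{p_i}{r-1}=\frac{\Pi}{r-1}$, which is independent of the constant terms $c_i$ because the $q_i$-th power sums of the roots of $W_i$ involve only elementary symmetric functions of degree $q_i<p_i$, computed from the truncated polynomial $(r-1)\theta^{p_i}-r\theta^{p_i-q_i}$. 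Equating these two expressions for $\sum_{x\in M}x$ and substituting the power-sum value yields (\ref{rpk1}).

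Finally I would apply Proposition \ref{propk12} with the same $S_r$, $W_i$, $k_1$, $k_2$, $\Pi$ and with (\ref{rpk1}) — now established — as its input hypothesis; it returns $r=2$, which is the assertion of the lemma. I do not expect any genuine obstacle at the level of Lemma \ref{lemn} itself: the substantive argument is precisely the one packaged in Proposition \ref{propk12}, where one writes $r=p/q$ in lowest terms, notes $p_i=s_ip$, $q_i=s_iq$ with $s_i=\gcd(p_i,q_i)$, and derives a contradiction from (\ref{rpk1}) if $r\neq2$ by bounding $k_2$ (case $r>2$) or $k_1$ (case $1<r<2$) below by $\tfrac1p\Pi$ and then showing some $W_i$ would need more than $s_i$ distinct roots with a common $q_i$-th power — impossible since $W_i'=p_i(r-1)\theta^{p_i-q_i-1}(\theta^{q_i}-1)$ forces those roots to be simple, while their ratios are simultaneously $q_i$-th and $p_i$-th roots of unity, hence $s_i$-th roots of unity. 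The only care needed in assembling Lemma \ref{lemn} is checking that the multiplicity conventions in the definition of $M$ are identical across the three statements, which is immediate.
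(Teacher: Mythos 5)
Your proposal is correct and matches the paper exactly: the paper also obtains Lemma \ref{lemn} by applying Proposition \ref{pl2} to get identity (\ref{rpk1}) and then feeding it into Proposition \ref{propk12}, and your recap of the internal arguments of both propositions agrees with theirs. Nothing further is needed.
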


Lemma \ref{lemn} follows from Propositions \ref{pl2} and \ref{propk12}. Lemma \ref{lemn} together with Proposition
\ref{prnw}  imply the statement of  Theorem \ref{tsym}.

\section{Invariants of singularities and Pl\"ucker formulas. Proofs of Theorems \ref{shust} and \ref{tabmult}}
Here we prove Theorem \ref{shust} in Subsection 4.2. Then we prove Theorem \ref{tabmult} in Subsection
4.3. Afterwards in Subsection 4.4 we state and prove a purely algebraic-geometric  Theorem \ref{corconic}
that is a direct consequence of Theorems \ref{tsym} and \ref{shust} that seems to be of
independent interest.

 The proof of Theorem \ref{shust} essentially uses general Pl\"ucker and genus formulas for plane algebraic curves; the corresponding background material is presented in Subsection 4.1. The main observation
is that the upper bound $2$ to the projective Puiseux exponents of all transverse
local branches of the curve
and Pl\"ucker formulas yield that the singularity invariants of the
considered curve must obey a relatively high lower bound.
On the other hand, the contribution of the points in the infinite line
$\oci$ appears to be not sufficient to fit that lower bound unless the curve is a conic.

\subsection{Invariants of plane curve singularities}
For the reader's convenience,
we recall here main definitions and formulas. Almost all this stuff is classically known
(see \cite[Chapter III]{BK}, \cite[\S10]{Mi}, and the modern exposition in \cite[Section I.3]{GLS}).

Let $(x_0:x_1:x_2)$ be homogeneous coordinates on $\cp^2$.
Let $\alpha\subset\cp^2$ be a reduced, irreducible curve of degree $d>1$, i.e., given by a homogeneous
square-free, irreducible polynomial
$F(x_0,x_1,x_2)$ of degree $d>1$. For any point $A\in\alpha$ and a sufficiently small closed
ball $V\subset\cp^2$ centered at $A$ the intersection $\alpha\cap V$
is topologically a bouquet of discs $b_i$, $1\le i\le r$, given by the local branches of the germ $(\alpha,A)$.

\smallskip
{\it (1) Multiplicity and dual multiplicity of a local branch.} Given a local branch $b_i$
of the curve $\alpha$ at $A\in\alpha$
and its Puiseux parametrization (\ref{curve}), the number $s(b_i)=q$ is called the {\it multiplicity}, and
the number $s^*(b_i)=p-q$ the
{\it dual multiplicity} of the branch $b_i$. Note that $s(b_i)$ is the intersection multiplicity of the
branch $b_i$ with a transversal line, while $s^*(b_i)+s(b_i)$ is the intersection multiplicity with the tangent line $T_Ab_i$.
Observe also that the subquadraticity condition for $b_i$
is equivalent to
the relation
\begin{equation}s^*(b_i)\le s(b_i)\ .\label{ed1}\end{equation}

\smallskip
{\it (2) $\delta$-invariant.} Let $f(x,y)=0$ be an equation of the germ $(\alpha,A)$ (just $F=0$ rewritten in local  coordinates $x,y$).
Then $\alpha_\eps:=\{f(x,y)=\eps\}\cap V$, for $0<|\eps|\ll1$, is a smooth surface with $r$ holes ({\it Milnor fiber}). The $\delta$-invariant
of the germ $(\alpha,A)$ admits several equivalent definitions and topologically can be defined as the genus of the closed
surface obtained by attaching a sphere with $r$ holes to the surface $\alpha_\eps$.
The genus formula in the form of Hironaka \cite{Hir} reads
$$\frac{(d-1)(d-2)}{2}=g(\alpha)+\sum_{A\in\Sing(\alpha)}\delta(\alpha,A)\ ,$$
where $g(\alpha)$ is the geometric genus of $\alpha$, i.e., the genus of the Riemann surface
obtained by the resolution of singularities of $\alpha$. In particular, we have
\begin{equation}\sum_{A\in\Sing(\alpha)}\delta(\alpha,A)\le\frac{(d-1)(d-2)}{2}\ .\label{ed2}\end{equation}

\smallskip
{\it (3) Class of the singular point ($\kappa$-invariant).} Given a germ $(\alpha,A)$ and local affine coordinates
$(x,y)$, suppose that the $y$-axis is not tangent to any of the local branches $b_i$, $1\le i\le r$. Denote
 by $\alpha'$ the polar curve of $\alpha$ defined by the equation $\frac{\partial f}{\partial y}=0$.
The class of the germ $(\alpha,A)$ is defined by
$$\kappa(\alpha,A)=(\alpha\cdot\alpha')_A\ ,$$ the intersection multiplicity of $\alpha$ and $\alpha'$ at $A$.
It is well-known (see, for instance, \cite[Propositions I.3.35 and I.3.38]{GLS}) that
\begin{equation}\kappa(\alpha,A)=2\delta(\alpha,A)+\sum_{i=1}^r(s(b_i)-1)\ .\label{ed3}\end{equation}

\smallskip
{\it (4) Hessian of the singular (or inflection)
point.} The {\it Hessian} $H_\alpha$ of the curve $\alpha$ is the curve given by the equation
$\det\left(\frac{\partial^2F}{\partial x_i\partial x_j}\right)_{0\le i,j\le2}=0$. The {\it Hessian} of the germ $(\alpha,A)$ is
$$h(\alpha,A)=(\alpha\cdot H_\alpha)_A\ ,$$ the intersection multiplicity of $\alpha$ and $H_\alpha$ at
$A$. It vanishes in all smooth points of $\alpha$, where $\alpha$ quadratically intersects its tangent line.
An expression for $h(\alpha,A)$ via the preceding invariants was found in \cite[Formula (2)]{shus}. It
can be written as
\begin{equation}h(\alpha,A)=3\kappa(\alpha,A)+\sum_{i=1}^r(s^*(b_i)-s(b_i))\ .\label{ed6}\end{equation}
In view of $\deg H_\alpha=3(d-2)$, B\'ezout's theorem yields (a Pl\"ucker formula)
\begin{equation}3d(d-2)=\sum_{A\in \alpha}h(\alpha,A)\ .\label{ed7}\end{equation}

\subsection{Proof of Theorem \ref{shust}}
Let $\alpha\subset\cp^2$ be a curve of degree $d\ge2$, satisfying the hypotheses of Theorem \ref{shust}.
We will show that $d=2$.

Observe that $\delta(\alpha,A)=\kappa(\alpha,A)=h(\alpha,A)=0$ for all points $A\in\alpha\setminus\oci$.
Denote by ${\mathcal B}_{tr}$, resp. ${\mathcal B}_{tan}$ the set of the local branches of $\alpha$ centered on $\oci$ and
transversal, resp. tangent to $\oci$. Relation (\ref{ed1}) holds for all the local branches $b\in{\mathcal B}_{tr}$,
since they are subquadratic, by the condition of Theorem \ref{shust}. Thus, their contributions to the sum
$\sum_i(s^*(b_i)-s(b_i))$ are non-positive. Therefore, from
(\ref{ed6}) and (\ref{ed7}), we get
\begin{equation}3d(d-2)\le 3\sum_{A\in\alpha\cap\oci}\kappa(\alpha,A)+\sum_{b\in{\mathcal B}_{tan}}(s^*(b)-s(b))\ .
\label{ed11}\end{equation}
Together with (\ref{ed2}) and (\ref{ed3}) this yields
$$3d(d-2)\le 6\sum_{A\in\alpha\cap\oci}\delta(\alpha,A)+3\sum_{b\in{\mathcal B}_{tr}\cup{\mathcal B}_{tan}}(s(b)-1)+
\sum_{b\in{\mathcal B}_{tan}}(s^*(b)-s(b))$$
$$\le3(d-1)(d-2)+3\sum_{b\in{\mathcal B}_{tr}\cup{\mathcal B}_{tan}}(s(b)-1)+
\sum_{b\in{\mathcal B}_{tan}}(s^*(b)-s(b))$$
\begin{eqnarray}&=&3(d-1)(d-2)+\sum_{b\in{\mathcal B}_{tr}\cup{\mathcal B}_{tan}}(s(b)-1)+\sum_{b\in
{\mathcal B}_{tr}}s(b)\nonumber\\
& &+\sum_{b\in{\mathcal B}_{tr}}(s(b)-2)+\sum_{b\in{\mathcal B}_{tan}}(s^*(b)+s(b)-2)\ .
\label{ed8}\end{eqnarray}
Developing $d=(\alpha\cdot\oci)$ into contributions of local branches $b\in{\mathcal B}_{tr}\cup{\mathcal B}_{tan}$,
we obtain
\begin{equation}\begin{cases}&\sum_{b\in{\mathcal B}_{tr}\cup{\mathcal B}_{tan}}(s(b)-1)= d-|{\mathcal B}_{tr}\cup{\mathcal B}_{tan}|-
\sum_{b\in{\mathcal B}_{tan}}s^*(b)\\
&\qquad\qquad\qquad\qquad\qquad\le d-|{\mathcal B}_{tr}|-2|{\mathcal B}_{tan}|,\\
&\sum_{b\in
{\mathcal B}_{tr}}s(b)=d-\sum_{b\in{\mathcal B}_{tan}}(s^*(b)+s(b))\le d-2|{\mathcal B}_{tan}|,\\
&\sum_{b\in{\mathcal B}_{tr}}(s(b)-2)+\sum_{b\in{\mathcal B}_{tan}}(s^*(b)+s(b)-2)\\
&\qquad\qquad\qquad=d-2|{\mathcal B}_{tr}\cup{\mathcal B}_{tan}|,
\end{cases}\label{ed10}\end{equation}
and hence the sequence of relations (\ref{ed8}) reduces to
\begin{equation}2\geq|{\mathcal B}_{tr}|+2|{\mathcal B}_{tan}|\ .\label{ed9}\end{equation}

If ${\mathcal B}_{tan}=\emptyset$ and all the branches $b\in{\mathcal B}_{tr}$ are centered at one point, then
$\sum_{b\in{\mathcal B}_{tr}}s(b)=d$, and the intersection multiplicity of $\alpha$ with the
tangent to one of the branches $b\in{\mathcal B}_{tr}$
appears to be greater than $d$. This implies that the latter tangent line is
contained in $\alpha$. Thus, $\alpha$ splits off a line,
contrary to the irreducibility assumption.

If ${\mathcal B}_{tan}=\emptyset$, $|{\mathcal B}_{tr}|=2$, and the two branches $b_1,b_2\in{\mathcal B}_{tr}$ have
distinct centers, we have an equality in (\ref{ed9}); hence, equalities in all the above relations, in particular,
in (\ref{ed11}). Thus, in view of (\ref{ed6}), (\ref{ed7})
and the inequality $s^*(b_i)\leq s(b_i)$ (subquadraticity), it means $s^*(b_i)=s(b_i)$, $i=1,2$. Intersecting
$\alpha$ with the tangent lines
to $b_1$ and $b_2$, we obtain $s(b_i)\le\frac{d}{2}$, $i=1,2$, while the intersection with $\oci$ yields $s(b_1)+s(b_2)=d$.
It follows that $s(b_1)=s(b_2)=\frac{d}{2}$. Choosing affine coordinates in $\cp^2\setminus\oci$ so that the
coordinate axes are tangent to $b_1$ and $b_2$ (at infinity) respectively, we obtain that the Newton polygon of the
defining polynomial
of $\alpha$ is just the segment $[(0,0),(d/2,d/2)]$.
Indeed, in local affine coordinates $z_1,w_1$ in a neighborhood of the center $A$ of the branch $b_1$ such that the tangent
line $T_Ab_1$ is the $z_1$-axis and $\oci$ is the $w_1$-axis, we have $b_1$ given by
$$z_1=t^{d/2},\quad w_1=c_1t^d(1+o(1)),\quad t\in(\cc,0)\,$$ which means that the Newton diagram of
$\alpha$ in these coordinates is the segment $[(d,0),(0,d/2)]$, i.e., the coefficients of all the monomials
$z_1^iw_1^j$ with $(i,j)$ below this segment vanish. In the coordinates $x=\frac{1}{z_1}$, $y=\frac{w_1}{z_1}$, this yields that
the coefficients of all monomials $x^iy^j$ with $j<i$ vanish. The same consideration with the affine coordinates
$z_2,w_2$ in a neighborhood of the center $B$ of the branch $b_2$ such that $T_Bb_2$ is the $z_2$ -axis and $\oci$ is the $w_2$-axis
leads to the conclusion that the coefficients of all monomials $x^iy^j$ with $j>i$ vanish. This
finally leaves the
only Newton segment $[(0,0),(d/2,d/2)]$.
Note that a polynomial with such a Newton segment factors into the product of
$\frac{d}{2}$ binomials of type $xy-\lambda$. Thus, $d=2$ due to the irreducibility of $\alpha$.

If $|{\mathcal B}_{tan}|=1$ and $|{\mathcal B}_{tr}|=0$, then we have an equality in (\ref{ed9}); hence, all
the above inequalities turn to be equalities, in particular, the second relation in (\ref{ed10}), that is, $s^*(b)+s(b)=2$ for the unique branch of
$\alpha$ centered on $\oci$, which finally means that $d=2$.

\subsection{Proof of Theorem \ref{tabmult}}

Let $\gamma$ denote the complex projective Zariski closure  of the curve $C$. Let $\alpha$ be  its
arbitrary
nonlinear irreducible component. The curve $\alpha$ has neither singular, nor inflection
points in $\cc^2$ (Theorem \ref{relsym}),
and each its local branch transverse to the infinity line (if any) is quadratic, by Corollary \ref{cortsym}.
Therefore, $\alpha$ is a conic, by Theorem \ref{shust}. Thus, the curve $\gamma$ is a finite union of conics and
lines, and $C$ is a union of arcs of conics and lines. The latter union of arcs is finite: each potential end of
an arc should be a singular point of the curve $\gamma$, and the number of singular points of an algebraic
curve is finite. At least one conical arc is present, since $C$ is a $C^4$-smoothly immersed curve that
does not lie in a line. The parameter interval (circle) of the curve $C$ is thus split into a finite number
of segments, each of them parameterizes an entire arc of conic (line) in $C$.
Any two  arcs parameterized by adjacent segments
have contact of order at least 5, since  $C$ is a $C^4$-smoothly immersed curve. Therefore,
no conical arc can be adjacent to a linear arc, which implies that there are no linear arcs at all.
No two arcs of distinct conics can be adjacent neither: otherwise, their intersection index would be
greater than 4, by the above statement. This implies that $C$ lies in just one conic and proves Theorem
\ref{tabmult}.

\subsection{Appendix: a general corollary of Theorems \ref{tsym} and \ref{shust}}
The following theorem is a direct consequence of Theorems \ref{tsym} and \ref{shust}.

\begin{theorem} \label{corconic} Let  $\alpha\subset\cp^2$  be an irreducible algebraic curve
distinct from a line such that all its singular and inflection points (if any)
lie in the infinity line $\oci$. Let for every point  $A\in\alpha\cap\oc_{\infty}$ each local branch $\beta$ of the curve
$\alpha$ at $A$  that is transverse to the infinity line $\oc_{\infty}$ (if any) have the local relative symmetry property with respect to some $(T_A\beta,A)$-local
multigerm $\Gamma=\Gamma(\beta)$. Then $\alpha$ is a conic.
\end{theorem}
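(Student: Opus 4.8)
The plan is to reduce Theorem \ref{corconic} to Theorem \ref{shust} by verifying that its hypotheses hold. First I would observe that the two hypotheses of Theorem \ref{shust} are: (a) $\alpha$ has no singular or inflection points in an affine chart $\cc^2\subset\cp^2$; and (b) every local branch of $\alpha$ at a point of $\alpha\cap\oci$ that is transverse to $\oci$ is subquadratic, i.e.\ $p\le 2q$ in its parametrization. Hypothesis (a) is literally assumed in Theorem \ref{corconic}: all singular and inflection points of $\alpha$ (if any) lie on $\oci$, so the affine chart $\cc^2=\cp^2\setminus\oci$ contains none of them. Thus the whole content of the reduction is to deduce (b) from the local relative symmetry assumption.

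For hypothesis (b), fix a point $A\in\alpha\cap\oci$ and a local branch $\beta$ of $\alpha$ at $A$ transverse to $\oci$. If $\beta$ is linear there is nothing to check, so assume $\beta$ is nonlinear. By hypothesis $\beta$ has the local relative symmetry property with respect to some $(T_A\beta,A)$-local multigerm $\Gamma(\beta)$ containing $\beta$. This is exactly the hypothesis of Theorem \ref{tsym}: a nonlinear irreducible germ $b=\beta$ at a point $A\in\oci$, transverse to $\oci$, with the local relative symmetry property. Theorem \ref{tsym} then yields that $\beta$ is quadratic, i.e.\ $r_\beta=p_\beta/q_\beta=2$, which is in particular subquadratic ($p_\beta\le 2q_\beta$, with equality). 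Hence every transverse local branch of $\alpha$ at every point of $\alpha\cap\oci$ is subquadratic, which is hypothesis (b).

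Having verified both hypotheses of Theorem \ref{shust} for $\alpha$, I would invoke that theorem directly to conclude that $\alpha$ is a conic, which is the assertion of Theorem \ref{corconic}. I do not expect any genuine obstacle here: the statement is explicitly advertised in the excerpt as ``a direct consequence of Theorems \ref{tsym} and \ref{shust},'' and indeed the only mild point to be careful about is the bookkeeping of degenerate cases — namely that a linear branch trivially satisfies the subquadraticity bound (recall the convention $r=\infty$, $p=\infty$ for a germ of line, so ``$p\le 2q$'' should be read as vacuously satisfied, or one simply excludes linear branches from the transversality discussion as Theorem \ref{shust} does by saying ``if any''), and that $\alpha$ is assumed distinct from a line so that Theorem \ref{shust} applies with $d=\deg\alpha\ge 2$. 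Apart from phrasing these conventions consistently, the argument is a one-line chaining of the two cited theorems.
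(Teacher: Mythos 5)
Your proposal is correct and follows essentially the same route as the paper: apply Theorem \ref{tsym} to each transverse local branch at infinity to conclude quadraticity (hence subquadraticity), note that the affine regularity/no-inflection hypothesis is assumed verbatim, and invoke Theorem \ref{shust}. One small remark: your parenthetical that a linear branch is ``vacuously subquadratic'' conflicts with the paper's convention $r_b=\infty$ for a germ of line, but the point is moot since an irreducible algebraic curve distinct from a line cannot have a linear local branch at all.
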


\begin{proof} Each local branch $\beta$ as above is quadratic (Theorem \ref{tsym}). Hence, $\alpha$ is a conic, by Theorem \ref{shust}.
 \end{proof}

\section{Acknowledgements}

We are grateful to Misha Bialy, Andrey Mironov and Sergei Tabachnikov for introducing us to polynomially integrable billiards, helpful discussions and providing
the starting point for our work: Theorem \ref{relsym}. This work was partly done during  the visits of the
first author (A.Glutsyuk) to   Sobolev Institute at Novosibirsk and to Tel Aviv University. He wishes to thank Andrey Mironov and
Misha Bialy for their invitations and hospitality and both institutions for their hospitality and support.
We are grateful to the referee for a very careful reading of
the paper and very helpful remarks, which also led to major improvement of the first author's paper \cite{gl}.

\end{document}